\newcommand\absdot[2]{
	\node at #1 {\normalsize $\bullet$};
	\node at #1 [below] {$#2$};
}
\newcommand{\plotperm}[1]{
	\foreach \j [count=\i] in {#1} {
		\absdot{(\i,\j)}{};
	};
}
\newcommand{\plotpermbox}[4]{
	\draw [darkgray, thick, line cap=round]
		({#1-0.5}, {#2-0.5}) rectangle ({#3+0.5}, {#4+0.5});
}
\newcommand{\plotpermborder}[1]{
	\plotperm{#1};
	\foreach \i [count=\n] in {#1} {};
	\plotpermbox{1}{1}{\n}{\n};
}
\newcommand{\plotpermgraph}[1]{
	\foreach \j [count=\i] in {#1} {
		\foreach \b [count=\a] in {#1} {
			\ifthenelse{\a<\i \AND \b>\j}{\draw (\a,\b)--(\i,\j);}{}
		};
	};
	\plotperm{#1};
}
\newcommand{\C}{{\mathcal C}}
\renewcommand{\S}{{\mathcal S}}
\newtheorem{theorem}{Theorem}
\newtheorem{definition}[theorem]{Definition}
\newtheorem{lemma}[theorem]{Lemma}
\newtheorem{proposition}[theorem]{Proposition}
\newtheorem{observation}[theorem]{Observation}
\newtheorem{question}[theorem]{Question}
\newcommand{\Av}{\operatorname{Av}}
\newcommand{\Inv}{\operatorname{Inv}}
\newcommand{\we}{\equiv_{\mathrm{WE}}}
\newcommand{\inv}{\preceq}
\newcommand{\ninv}{\npreceq}
\renewcommand{\leq}{\leqslant}
\title{Wilf-collapse in permutation classes having two basis elements of size three.}
\author{Michael Albert \and Jinge Li}
\begin{document}
\maketitle

\begin{abstract}
We consider permutation classes having two basis elements of size three and one further basis element. We completely classify the possible enumeration sequences of such classes and demonstrate that there are far fewer of them than might be expected in principle.
\end{abstract}

\section{Introduction}

Let $\S = \cup \S_n$ be the union of the sets of permutations of $[n]  = \{1, 2, \dots, n\}$. We represent the elements of $\S$ in \emph{one line notation}, i.e., we think of each element $\pi \in \S_n$ as a sequence $\pi_1 \pi_2 \cdots \pi_n$. In this representation there is a natural partial order $\inv$ on $\S$ defined by $\pi \inv \sigma$ if there is a subsequence $\sigma_{i_1} \sigma_{i_2} \sigma_{i_n}$ of $\sigma$ having the same size as $\pi$ and satisfying $\sigma_{i_s} \leq \sigma_{i_t}$ if and only if $\pi_s \leq \pi_t$. In passing we note that the word ``natural'' in the preceding sentence is not poorly chosen -- this order corresponds to the substructure relation on finite models of the theory of two linear orders. We refer to this order as \emph{involvement} and its complement as \emph{avoidance}, i.e., if $\pi \inv \tau$ we say that $\pi$ is involved in $\tau$, while if $\pi \ninv \tau$ we say that $\tau$ avoids $\pi$.

The study of \emph{permutation classes} can be thought of as the study of the downwards-closed (or hereditary) subsets of $\S$ with respect to the order $\inv$. Given such a class $\C$ note that if we take $X$ to be the set of $\inv$-minimal permutations in the complement of $\C$ then 
\[
\C = \{ \pi \in \S \, : \, \mbox{for all $\chi \in X$, $\chi \ninv \pi$} \}.
\]
We say that the permutations in $\C$ \emph{avoid} the permutations in $X$ and write $\C = \Av(X)$. For a more comprehensive introduction to permutation classes we recommend Vatter's survey \cite{Vatter2015Permutation-cla} and references therein.

Given a permutation class $\C$ we are interested in its \emph{enumeration sequence}, $(c_n)_{n \geq 0}$ or its \emph{generating function} $C(t) = \sum_{n=0}^{\infty} c_n t^n$ where $c_n$ is equal to the cardinality of $\S_n \cap \C$. Two permutation classes are said to be \emph{Wilf-equivalent} if they have the same enumeration sequence (or generating function). 

There are a variety of trivial Wilf-equivalences that arise from the automorphisms of $(\S, \inv)$. These automorphisms form a dihedral group of order 8 and can be generated by reversal (of the corresponding words), and inverse. They are most naturally understood geometrically as acting on the set of points $(i, \pi_i)$ within a square $[0, n+1] \times [0, n+1]$ under the normal symmetries of the square. The first instance of a non-trivial Wilf-equivalence is that the pair of classes $\Av(312)$ and $\Av(321)$ are both enumerated by the Catalan numbers. 

The main question we wish to address is:

\begin{question}
Given a class $\C = \Av(X)$, for which $\pi$ and $\tau$ in $\C$ is it the case that $\C \cap \Av(\pi)$ and $\C \cap \Av(\tau)$ are Wilf-equivalent?
\end{question}

For convenience we introduce the notation $\Av_\C(\pi)$ for $\C \cap \Av(\pi)$ and refer to such classes as \emph{principal subclasses} of $\C$. We also use $\we$ to denote the equivalence relation of Wilf-equivalence and, when context is clear we may refer to Wilf-equivalence of $\pi$ and $\tau$ rather than of the principal classes they define.

In \cite{Simion1985Restricted-perm}, one of the seminal papers for the study of permutation patterns, Simion and Schmidt considered specifically permutation classes of the form $\Av(X)$ where all the permutations in $X$ are of size $3$. When $X$ consists of three or more elements, these sets are quite restricted indeed and not terribly interesting. When $X$ is a singleton then as noted above the two (up to trivial Wilf-equivalence) types are both enumerated by the Catalan numbers. Wilf-equivalence between principal subclasses of $\Av(231)$ was considered in \cite{Albert2015A-general-theor}, where a necessary (and conjecturally sufficient) condition for Wilf-equivalence was introduced for these classes. The case of $\Av(321)$ is more complex for various reasons, mostly having to do with the complicated nature of the involvement relation on these permutations. Here we concentrate on Wilf-equivalences among principal subclasses of $\Av(X)$ where $X$ consists of precisely two permutations of size 3. Subclasses of such classes have already been considered in some generality in \cite{Vatter2002Permutations-av} and implicitly elsewhere. There are systematic and automatic methods to determine their individual enumerations, but no general arguments about when their enumerations coincide.

Most results concerning non-trivial Wilf-equivalences have been ad hoc and quite limited, typically dealing with a specific pair of classes, a small finite set of classes, or very simple infinite families. We are interested in following a more systematic approach aimed at discovering whole families of Wilf-equivalences. In particular, we are interested in the phenomenon of \emph{Wilf-collapse}. We say that a class $\C$ exhibits a Wilf-collapse if the sequence, $w_n$ defined as the number of equivalence classes of $\we$ on the permutations in $\C$ of size $n$ has the property that $w_n = o(c_n)$, i.e., there are far fewer enumeration sequences for principal subclasses of $\C$ than there might be in principle. The aforementioned paper \cite{Albert2015A-general-theor} illustrates a Wilf-collapse in $\Av(231)$ demonstrating there that $w_n = o(2.5^n)$ while $c_n^{1/n} \to 4$.

If one of the automorphisms of $\S$ takes a set $X$ to a set $Y$ then the two classes $\Av(X)$ and $\Av(Y)$ are isomorphic as partially ordered sets and hence their behaviour with respect to Wilf-equivalences is the same. Furthermore, the class $\Av(123, 321)$ contains no permutations of size greater than 4 so is not of interest to us in terms of classifying Wilf-equivalences therein. Thus, in our investigation, we may take one of the elements of $X$ to be $312$. Even then there are still some symmetries available leaving only four classes that we need to consider:
\[
\mbox{$\Av(312, 123)$, $\Av(312, 213)$, $\Av(312, 231)$, and $\Av(312, 321)$.}
\]

In the following sections we consider Wilf-equivalences among principal subclasses of these classes one by one. The first two are very straightforward and we will demonstrate a Wilf-collapse directly. In the latter two cases we describe an easily determined equivalence relation on the class which turns out to be the same as that of Wilf-equivalence. To that end we introduce the following definition:

\begin{definition}
An equivalence relation $\sim$ on a class $\C$ is \emph{sound for Wilf-equivalence} if $\pi \sim \tau$ implies $\Av_{\C} (\pi) \we \Av_{\C} (\tau)$. We say that $\sim$ is \emph{sound and complete for Wilf-equivalence} if it is sound and $\Av_{\C} (\pi) \we \Av_{\C} (\tau)$ implies $\pi \sim \tau$.
\end{definition}

Before studying the classes individually we introduce a little more notation which we make use of in several cases.


\begin{definition}
Let $\pi \in \S_n$ and $\tau \in \S_m$. The \emph{sum}, $\pi \oplus \tau \in \S_{n+m}$ is the permutation defined by:
\[
(\pi \oplus \tau)(k) = 
\left\{
\begin{array}{ll}
\pi(k) & \mbox{for $k \leq n$,} \\
n + \tau(k-n) & \mbox{for $k > n$.}
\end{array}
\right.
\]
A permutation is \emph{$\oplus$-indecomposable} if it is not the sum of two non-empty permutations. 
\end{definition}

In one line notation, $\pi \oplus \tau$ is formed by concatenating $\pi$ and $n + \tau$ where by $n + \tau$ we mean the sequence formed by adding $n$ to each element of the sequence $\tau$. Since the operation $\oplus$ is easily seen to be associative, any permutation has a unique representation as a sum of $\oplus$-indecomposable permutations and so we can represent permutations in a class as words over the alphabet consisting of the $\oplus$-indecomposable permutations of that class (just by suppressing the operation $\oplus$ itself). From now on we make no distinction between this representation and the corresponding permutations generally using upper case Roman letters to represent words and lower case ones to represent letters.

There is another operation $\ominus$ (which we shall use only briefly) where in one line notation, $\pi \ominus \tau$ is formed by concatenating $\pi + m$ and $\tau$. The two operations are illustrated below:

\begin{center}
	\begin{tikzpicture}[scale=0.3]
		\begin{scope}[shift={(0,0)}]
			\plotpermborder{2,1};
		\end{scope}
		\begin{scope}[shift={(2,2)}]
			\plotpermborder{2,4,1,3};
		\end{scope}
		\node at (3,-1) {$21 \oplus 2413 = 214635$};
		\begin{scope}[shift={(12,4)}]
			\plotpermborder{2,1};
		\end{scope}
		\begin{scope}[shift={(14,0)}]
			\plotpermborder{2,4,1,3};
		\end{scope}
		\node at (15,-1) {$21 \ominus 2413 = 652413$};
	\end{tikzpicture}
\end{center}

When working with the plus-indecomposable words representing permutations, we can recognise involvement greedily.

\begin{proposition}
\label{pro:greedy}
Let $\C$ be a permutation class and $W$ and $V$ be two words over the $\oplus$-indecomposable permutations of $\C$. Suppose that $W = wW'$ ($w$ a single letter). Then $W \inv V$ if and only if the following two conditions are satisfied:
\begin{itemize}
\item
$w$ is involved in some letter of $V$, and
\item
if $v_i$  is the first such letter, $V^R$ is the suffix of $V$ following (but not including) $v_i$,  $A$ is the longest prefix of $W$ with $A \inv v_i$, and $W = AB$; then $B \inv V^R$.
\end{itemize}
\end{proposition}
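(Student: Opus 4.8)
The plan is to reduce everything to a single structural fact about how an $\oplus$-indecomposable pattern can sit inside a sum, and then run a greedy exchange argument. The structural fact I would isolate first is the following: if $\beta$ is $\oplus$-indecomposable and $\beta \inv \gamma_1 \oplus \cdots \oplus \gamma_s$, then already $\beta \inv \gamma_j$ for a single $j$. To see this, fix an embedding and let $j^*$ be the least index of a block meeting the embedded copy; since every point lying in a block of index greater than $j^*$ is strictly above and to the right of every point in block $j^*$, a copy spread across two or more blocks would exhibit $\beta$ as $\beta' \oplus \beta''$ with both parts nonempty, contradicting indecomposability. Applying this to each letter of $W$ under a fixed embedding into $V$, I obtain that each letter lands in a single block of $V$, and the block indices are forced to be weakly increasing (since an earlier letter lies below and to the left of a later one). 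Consequently the letters mapping to a common block $v_j$ form a contiguous factor, and I arrive at the reformulation I would rely on throughout: $W \inv V$ \emph{if and only if} $W$ can be written as a concatenation $U_1 U_2 \cdots U_m$ of (possibly empty) consecutive factors with $U_j \inv v_j$ for every $j$.

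With this in hand the backward direction is immediate and does not even use the minimality of $i$ or the maximality of $A$: given $A \inv v_i$ and $B \inv V^R$, monotonicity of $\oplus$ under $\inv$ yields $W = A \oplus B \inv v_i \oplus V^R$, and $v_i \oplus V^R = v_i \oplus v_{i+1} \oplus \cdots \oplus v_m$ is a sum-suffix of $V$, hence involved in $V$; composing gives $W \inv V$. The first bullet of the forward direction is equally quick: the leading letter $w$ is indecomposable and, under any embedding, lands in some single block, so $w$ is involved in at least one letter of $V$, which is exactly the first condition; I then let $v_i$ be the first such letter.

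The substance is the second bullet of the forward direction, and here the plan is a greedy exchange. Starting from \emph{any} split $W = U_1 \cdots U_m$ with $U_j \inv v_j$, let $p$ be the index of the first nonempty factor. Then $w$ is the first letter of $U_p$, so $w \inv U_p \inv v_p$, and minimality of $i$ forces $i \leq p$; in particular $U_1, \dots, U_{i-1}$ are empty, so $U_i$ is itself a prefix of $W$ with $U_i \inv v_i$. Since $A$ is the \emph{longest} prefix of $W$ involved in $v_i$, we get $|U_i| \leq |A|$, which says precisely that $A$ reaches at least as far into $W$ as the boundary after $U_i$. Writing $W = AB$, this makes $B$ a sum-suffix of $U_{i+1} \cdots U_m$; and since $U_{i+1} \cdots U_m \inv v_{i+1} \cdots v_m = V^R$ while any sum-suffix $Z$ of $Y$ satisfies $Z \inv Y$, I conclude $B \inv V^R$, as required.

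I expect the only real obstacle to be making the single-block lemma and the weakly-increasing-blocks claim fully rigorous, since these are the steps that convert the geometric picture of a sum as a staircase of blocks into the clean combinatorial reformulation; once that reformulation is secured, both the trivial backward direction and the greedy argument for the forward direction are short. A minor point to check is the boundary behaviour when $B$ is empty (in which case $B \inv V^R$ holds vacuously) and when various factors $U_j$ are empty, but none of these cause difficulty.
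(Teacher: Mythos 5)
Your proof is correct and follows essentially the same route as the paper's: both arguments rest on the fact that an $\oplus$-indecomposable letter must embed into a single letter of $V$ (so blocks of $W$ map to blocks of $V$ in weakly increasing order) and on the maximality of $A$ forcing $B$ to embed past $v_i$. The paper's proof is simply a compressed form of yours, performing the case analysis (either $w$ lands in $v_i$, or the whole embedding lies in $V^R$) directly on a fixed embedding and treating your single-block lemma and factorization reformulation as immediate.
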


\begin{proof}
If $B \inv V^R$ then it is obvious that $AB \inv V$ simply by embedding $A$ into $v_i$ and the remainder into $V^R$. The converse is almost as immediate. Let an embedding of $W$ into $V$ be given. Either it includes $w$ embedding into $v_i$ or it occurs entirely in $V^R$. In the latter case we have an embedding of $B$ into $V^R$. But in the former case no part of $B$ can embed into $v_i$ either by the maximality of $A$, and so $B \inv V^R$ regardless.
\end{proof}

Of course this proposition also applies to prefixes of $W$ rather than single letters (by inductive application) and also suffixes (by arguing from the right rather than from the left). When the relation $\inv$ on the $\oplus$-indecomposable permutations in $\C$ is sufficiently limited it will be a powerful tool in restricting how permutations of $\C$ can be involved in one another.

%
%

\section{$\Av(312, 123)$}
\label{sec:312-123}

Throughout this section $\C = \Av(312, 123)$. The enumeration sequence of $\C$ has $c_n = \binom{n}{2} + 1$ and its permutations are characterised as being formed from up to three decreasing segments arranged as shown below.

\begin{center}
	\begin{tikzpicture}[scale=0.5]
		\draw (0,0) grid (3,3);
		\draw[thick] (0.1, 1.9) -- (0.9,1.1);
		\draw[thick] (1.1, 2.9) -- (1.9,2.1);
		\draw[thick] (2.1,0.9) -- (2.9,0.1);
	\end{tikzpicture}
\end{center}

In other words, every permutation in $\C$ is of the form $(\alpha \oplus \beta) \ominus \gamma$ where $\alpha$, $\beta$, and $\gamma$ are decreasing permutations. We can therefore describe the permutations in this class by triples $(a,b,c)$ of non-negative integers describing the number of elements belonging to each segment (in left to right order according to the diagram above). In order to avoid ambiguity in the representation of a decreasing sequence of size $c$ (which could be written as either $(x,0,y)$ or $(0,x,y)$ for any $x + y = c$) we also insist that either both or neither of $a$ and $b$ should equal $0$. With these conventions, the order relation in $\C$ is a minor modification of the usual order on $\mathbb{N}^3$ given by:
\begin{align*}
(a, b, c) &\leq (a',b',c') \: \mbox{if $a \leq a'$, $b \leq b'$ and $c \leq c'$; and} \\
(0, 0, c) &\leq (a',b',c') \: \mbox{if $c \leq a' + c'$ or $c \leq b' + c'$}.
\end{align*}

For this class it turns out that for all $n > 1$ there are exactly two Wilf-equivalence classes of permutations of size $n$. 

\begin{theorem}
Let $\pi, \tau \in \C$. Then
\[
\Av_{\C}(\pi) \we \Av_{\C}(\tau)
\]
if and only if $ \left | \pi \right | = \left | \tau \right |$, and either both or neither of $\pi$ and $\tau$ is strictly decreasing.
\end{theorem}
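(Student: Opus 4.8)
My plan is to compute, for each pattern $\chi \in \C$, the number of permutations of each size $m$ that \emph{contain} $\chi$. Since the enumeration of $\C$ itself is fixed at $c_m = \binom{m}{2}+1$, the size-$m$ term of $\Av_\C(\chi)$ equals $c_m$ minus this containment count, so two patterns are Wilf-equivalent precisely when their containment counts agree for every $m$. I will carry this out in the triple encoding $(a,b,c)$ using the explicit order relation stated above, splitting according to whether $\chi$ is strictly decreasing.

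First, the non-decreasing case: write $\chi=(a,b,c)$ with $a,b\geq 1$ and $|\chi|=a+b+c=n$. Because $\chi$ is not of the form $(0,0,\cdot)$, the second clause of the order relation never applies, so $\chi \inv (x,y,z)$ holds exactly when $a\le x$, $b\le y$, $c\le z$. In particular any containing $\sigma=(x,y,z)$ automatically satisfies $x,y\ge 1$, so no decreasing permutation contains $\chi$ and the convention on $(0,0,\cdot)$ creates no ambiguity. Substituting $x=a+x'$, $y=b+y'$, $z=c+z'$, the containing permutations of size $m$ biject with nonnegative solutions of $x'+y'+z'=m-n$, of which there are $\binom{m-n+2}{2}$ (and none when $m<n$). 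This count depends only on $n$, not on the individual parts, so all non-decreasing patterns of a common size are Wilf-equivalent, giving the ``if'' direction for that class.

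Next I handle the strictly decreasing pattern $\chi=(0,0,n)$, where the second clause governs: $\chi \inv (x,y,z)$ iff $n\le \max(x,y)+z$, i.e.\ iff the longest decreasing subsequence $\max(x,y)+z$ of $\sigma$ reaches length $n$. To separate this from the non-decreasing class it suffices to find one size at which the containment counts disagree. At $m=n$ both counts equal $1$ (only $\chi$ itself), but at $m=n+1$ the non-decreasing count is $\binom{3}{2}=3$, whereas for the decreasing pattern a short computation gives $2n$: the decreasing $\sigma$ contributes $1$, and a general $\sigma=(x,y,z)$ with $x,y\ge 1$, $x+y+z=n+1$ contains $\chi$ iff $\min(x,y)=1$, contributing $2n-1$. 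Since $2n\neq 3$ for every $n\ge 2$, the decreasing pattern is never Wilf-equivalent to a non-decreasing one of the same size. Distinct sizes are then dispatched at one stroke: if $|\pi|=n<|\tau|$, then at size $n$ every pattern is contained by exactly one permutation (itself) while $\tau$ is contained by none, so the two avoidance sequences already differ there.

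The routine parts are the two binomial evaluations; the only point demanding care is the bookkeeping around the degenerate encoding $(0,0,c)$ and the two-clause order relation—specifically, ensuring that decreasing permutations $\sigma$ are counted exactly once and are correctly excluded from (respectively included in) the containment sets, and that the non-degenerate triples really do carry $x,y\ge 1$. I expect this case-tracking, rather than any genuine combinatorial difficulty, to be the main thing to get right.
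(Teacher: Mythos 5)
Your proposal is correct and takes essentially the same approach as the paper: both arguments pass from avoidance to the containment sets $\Inv_{\C}(\chi)$ and use the triple parametrization $(p_1+x,p_2+y,p_3+z)$, $(x,y,z)\in\mathbb{N}^3$, to see that every non-decreasing pattern of size $n$ has the same containment count $\binom{m-n+2}{2}$ at each size $m$. The only divergences are minor: where you separate the strictly decreasing pattern by an explicit count at size $n+1$ ($2n$ versus $3$), the paper instead invokes Erd\H{o}s--Szekeres (finite versus infinite avoidance class), and you spell out the unequal-size case that the paper leaves implicit; both routes are valid.
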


\begin{proof}
If both $\pi$ and $\tau$ are decreasing and of the same length then $\pi = \tau$ and certainly $\Av_{\C}(\pi) \we \Av_{\C}(\tau)$. On the other hand if one (say $\pi$) is decreasing but the other is not then, by the Erd\H{o}s-Szekeres theorem (\cite{Erdos1935A-combinatorial}) $\Av_{\C}(\pi)$ is finite, while $\Av_{\C}(\tau)$ is infinite and so $\Av_{\C}(\pi) \not\we \Av_{\C}(\tau)$. 

So suppose that neither $\pi$ nor $\tau$ is decreasing, but they are of the same size. For $\sigma \in \C$ denote by $\Inv_{\C}(\sigma)$ the set of permutations in $ \C$ involving $\sigma$. Observe that there is a size-preserving bijection between $\Inv_{\C}(\pi)$ and $\Inv_{\C}(\tau)$ if and only if there is a size-preserving bijection between $\Av_{\C}(\pi)$ and $\Av_{\C}(\pi)$. However, if $\pi$ corresponds to the triple $(p_1, p_2, p_3)$ (with $p_1, p_2 \neq 0$ since $\pi$ is not decreasing) and $\tau$ to the triple $(t_1, t_2, t_3)$ then the elements of $\Inv_{\C}(\pi)$ (resp.~$\Inv_{\C}(\tau)$) correspond to the triples $(p_1 + x, p_2 + y, p_3 + z)$ (resp.~$(t_1+x, t_2+y, t_3+z)$) for $(x,y,z) \in \mathbb{N}^3$ and so there is an obvious size-preserving, and in fact order-preserving bijection between them.
\end{proof}

The proof above illustrates an order-preserving bijection between $\Inv_{\C}(\pi)$ and $\Inv_{\C}(\tau)$ for non-decreasing $\pi$ and $\tau$ of equal size. In fact, it is not even required that $\pi$ and $\tau$ have equal size for this bijection to apply. So, in the complementary world where we consider Wilf-equivalences between the complements of ideals this class exhibits an even greater collapse of a sort. It is easy to show that there is in general no order-preserving bijection between $\Av_{\C}(\pi)$ and $\Av_{\C}(\tau)$. For example, take $\pi = 2143$ (corresponding to the triple $(2,2,0)$) and $\tau = 3214$ (corresponding to the triple $(3,1,0)$). The elements of $\Av_{\C}(\pi)$ correspond to triples of the form $(0,0,y)$, $(x,1,y)$ or $(1,x,y)$. Those of $\Av_{\C}(\tau)$ correspond to triples of the form $(0,0,y)$, $(1,x,y)$, or $(2,x,y)$. Every non-decreasing element of $\Av_{\C}(\pi)$ has exactly two covers, while in $\Av_{\C}(\tau)$ the elements $(1,a,b)$ have three covers namely $(2,a,b)$, $(1,a+1,b)$ and $(1,a,b+1)$.

\section{$\Av(312, 213)$}
\label{sec:312-213}

Throughout this section $ \mathcal{C} = \Av(312, 213)$. The permutations in $\C$ can be described by a wedge like structure as shown below.
\begin{center}
	\begin{tikzpicture}[scale=0.8]
		\draw (0,0) grid (2,1);
		\draw[thick] (0.1, 0.1) -- (0.9,0.9);
		\draw[thick] (1.1, 0.9) -- (1.9,0.1);
	\end{tikzpicture}
\end{center}

That is, any $\pi \in \mathcal{C}$ can be partitioned into a strictly increasing sequence followed by a strictly decreasing sequence. Alternatively, we can say that the least element of $\C$ must be either the first or the last element (note how the conditions of avoiding $312$ and $213$ imply this -- were the least element not in the first or last position then any element to its left and any element to its right would define one of the two patterns which are supposed to be avoided), and this condition applies recursively on deletion of the least element. It is easy to see that the enumeration of $\C$ is given by $c_n = 2^{n-1}$ (simply by noting that, by the reverse of the previous discussion if we build a permutation in $\C$ from greatest element down then we have, after placing the first element, always two choices for the addition of a new least element). 

We can view $\C$ as being built recursively from the permutation $1$ and closing under the two operations $\pi \mapsto 1 \oplus \pi$ and $\pi \mapsto \pi \ominus 1$.

\begin{observation}
\label{obs:order-in-wedges}
The order relation between elements of $\C$ is described recursively by:
\begin{align*}
1 \oplus \pi \inv 1 \oplus \tau &\iff \pi \inv \tau , \\
1 \oplus \pi \inv \tau \ominus 1 &\iff 1 \oplus \pi \inv \tau , \\
\pi \ominus 1 \inv 1 \oplus \tau &\iff \pi \ominus 1 \inv \tau \mbox{, and} \\
\pi \ominus 1 \inv \tau \ominus 1 &\iff \pi \inv \tau.
\end{align*}
\end{observation}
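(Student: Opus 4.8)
The plan is to prove the four biconditionals directly from the definition of $\inv$ as the existence of an order-isomorphic subsequence, viewing an embedding as an increasing map $f$ on positions that also preserves the relative order of values. All four backward implications are routine. For the two ``same shape'' cases ($\pi \inv \tau \Rightarrow 1 \oplus \pi \inv 1 \oplus \tau$ and $\pi \inv \tau \Rightarrow \pi \ominus 1 \inv \tau \ominus 1$) one simply adjoins the forced corner point to a given embedding. For the two ``mixed'' cases the backward direction follows by transitivity from the trivial facts $\tau \inv \tau \ominus 1$ and $\tau \inv 1 \oplus \tau$, where the relevant copy of $\tau$ occupies the first, resp.\ last, $|\tau|$ positions. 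So all the content lies in the forward implications.

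Next I would exploit symmetry to halve the work. Reversal is an automorphism of $(\S, \inv)$ fixing $\C$ (the reverse of $312$ is $213$ and vice versa), and in coordinates it sends $1 \oplus \pi$ to $\pi^R \ominus 1$ and $\tau \ominus 1$ to $1 \oplus \tau^R$, where $\pi^R$ denotes the reverse of $\pi$. Applying reversal turns the first equivalence into the fourth and the second into the third, so it suffices to establish the forward directions of the first two.

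Those two forward implications rest on a single observation: in $1 \oplus \sigma$ the minimum value sits in the leftmost position, while in $\sigma \ominus 1$ it sits in the rightmost position, and the adjoined point in each of $1 \oplus \tau$ and $\tau \ominus 1$ is simultaneously globally extreme in position and the global minimum in value. For the first equivalence I would take an embedding $f$ of $1 \oplus \pi$ into $1 \oplus \tau$ and reroute the leading point of $1 \oplus \pi$ to the leading point of $1 \oplus \tau$: since the latter is both strictly leftmost and the global minimum, this reassignment preserves both the position order and the value order, so $f$ remains an embedding, and restricting to the remaining points embeds $\pi$ into $\tau$. For the second equivalence I would instead show the trailing point of $\tau \ominus 1$ cannot be used: it is the rightmost position and the global minimum, so any source point mapped to it would have to be both rightmost and minimum in $1 \oplus \pi$ — but in $1 \oplus \pi$ the minimum is leftmost, and these positions differ once $\pi$ is nonempty. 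Hence every embedding of $1 \oplus \pi$ lands in the first $|\tau|$ positions of $\tau \ominus 1$, which carry a copy of $\tau$, giving $1 \oplus \pi \inv \tau$.

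I expect the delicate points to be bookkeeping rather than conceptual: checking that the rerouting in the first case keeps $f$ strictly increasing (which uses that the target corner is strictly leftmost) and handling the base cases where $\pi$ or $\tau$ is a single point, where the ``leftmost $\neq$ rightmost'' dichotomy degenerates. These are routine, so the main obstacle is simply organising the position/value preservation carefully enough that the reroute-or-forbid dichotomy is manifestly correct; once the extremality of the adjoined corner point is isolated, each case becomes a one-line verification.
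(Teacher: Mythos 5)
Your proof is correct, but there is nothing in the paper to compare it against: the statement appears there as an unproved Observation, with the wedge structure of $\C$ evidently considered sufficient justification. Your argument is the natural formalization of that tacit reasoning. The two backward directions via transitivity ($\tau \inv \tau \ominus 1$ and $\tau \inv 1 \oplus \tau$) are fine; the reduction by reversal is legitimate, since reversal is an automorphism of $(\S, \inv)$ that fixes $\C = \Av(312,213)$ (reversal swaps $312$ and $213$) and carries $1 \oplus \pi$ to $\pi^R \ominus 1$, so it does exchange the first equivalence with the fourth and the second with the third; and the two forward implications rest correctly on the key property you isolate, namely that the adjoined corner point is simultaneously extreme in position and globally minimal in value, so that an embedding into $1 \oplus \tau$ can be rerouted through the corner (first case), while an embedding of $1 \oplus \pi$ into $\tau \ominus 1$ can never use the trailing corner (second case). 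One point deserves to be made explicit rather than dismissed as routine bookkeeping: the second (and third) equivalence is literally false when $\pi$ and $\tau$ are both empty, since $1 \inv 1$ but $1 \not\inv \varepsilon$; your ``forbid the trailing point'' step uses the nonemptiness of $\pi$ at exactly that spot. This causes no real harm --- the paper's recursive description builds $\C$ from the permutation $1$, so $\pi$ and $\tau$ range over nonempty elements --- but a finished write-up should state that hypothesis, since it is precisely the degenerate case in which the claimed equivalence fails.
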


Our goal is to prove that $\C$ exhibits the greatest possible Wilf-collapse, to wit:

\begin{theorem}
Let $\pi, \tau \in \C$. Then
\[
\Av_{\C}(\pi) \we \Av_{\C}(\tau)
\]
if and only if $ \left | \pi \right | = \left | \tau \right |$.
\end{theorem}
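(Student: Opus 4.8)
The plan is to reduce the whole question to a classical fact about subsequence-containment among binary words. Necessity of $|\pi| = |\tau|$ is immediate: if, say, $|\pi| < |\tau|$, then at size $n = |\pi|$ every permutation of $\C$ of that size avoids $\tau$ (nothing so small can involve $\tau$), whereas $\pi$ itself is the unique size-$|\pi|$ permutation involving $\pi$; hence $\Av_\C(\pi)$ has exactly one fewer permutation of size $|\pi|$ than $\Av_\C(\tau)$, and the two enumeration sequences already differ at index $|\pi|$. The real content is the converse.

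First I would record the bijective encoding suggested by the recursive construction. Repeatedly peeling off the least element (which, as the discussion preceding \Cref{obs:order-in-wedges} shows, always sits at the front or at the back) encodes each $\sigma \in \C \cap \S_n$ as a word $\phi(\sigma) \in \{L, R\}^{n-1}$: the $i$-th letter records whether the $i$-th smallest value is leftmost ($L$, an outer $1 \oplus$) or rightmost ($R$, an outer $\ominus 1$) at the moment it is removed. This is a bijection (recovering $c_n = 2^{n-1}$), with $\phi(1)$ the empty word and $\phi(1 \oplus \sigma') = L\,\phi(\sigma')$, $\phi(\sigma' \ominus 1) = R\,\phi(\sigma')$.

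The key step translates involvement into the subword (subsequence) order on these words: I claim $\pi \inv \sigma$ if and only if $\phi(\pi)$ is a subsequence of $\phi(\sigma)$. This follows by induction directly from \Cref{obs:order-in-wedges}. Comparing the leading letters of $\phi(\pi)$ and $\phi(\sigma)$, the four cases of the observation say exactly this: if the leading letters agree we strip both and recurse (the $(L,L)$ and $(R,R)$ rules), and if they disagree we strip only the leading letter of $\phi(\sigma)$ and recurse (the $(L,R)$ and $(R,L)$ rules). This is precisely the greedy procedure deciding whether one word embeds as a subsequence of another, and the base cases ($\pi = 1$ is involved in everything; a longer $\pi$ is never involved in the one-point permutation) match the corresponding facts about the empty word.

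With this translation in hand, $\Av_\C(\pi) \cap \S_n$ is in bijection with the binary words of length $n-1$ that do \emph{not} contain $\phi(\pi)$, a fixed word of length $|\pi| - 1$, as a subsequence. It therefore suffices to show that, for binary words, the number of length-$N$ words avoiding a fixed word $a$ as a subsequence depends only on $N$ and $|a|$. I would prove this with the greedy matching automaton: scan a candidate word left to right while maintaining how long a prefix of $a$ has been matched, a state in $\{0, 1, \dots, |a|\}$; from every non-final state exactly one of the two letters advances the match while the other leaves it unchanged, the final state is absorbing, and the avoiding words are those never reaching it. Since this advance/stay structure is identical for every $a$ of a given length, the avoiding words biject with the length-$N$ advance/stay choice sequences using fewer than $|a|$ advances, of which there are $\sum_{j=0}^{|a|-1}\binom{N}{j}$, independent of $a$. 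Thus $|\Av_\C(\pi)\cap\S_n| = \sum_{j=0}^{|\pi|-2}\binom{n-1}{j}$ depends only on $|\pi|$ and $n$, which gives the theorem. I expect the only delicate point to be verifying the base cases and the exact direction of the recursion in the subword characterization; once that correspondence is secured, the counting step is the standard supersequence-counting argument and presents no real difficulty.
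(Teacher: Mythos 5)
Your proof is correct, but it takes a genuinely different route from the paper's. The paper proves the substantive direction by induction on $|\pi|$: using the decomposition $\Av_{\C}(\pi) = \{1\} \cup \left( 1 \oplus \Av_{\C}(\pi') \right) \cup \left( \Av_{\C}(\pi) \ominus 1 \right)$, it builds a size-preserving bijection between $\Av_{\C}(\pi)$ and $\Av_{\C}(\tau)$ recursively from one between $\Av_{\C}(\pi')$ and $\Av_{\C}(\tau')$, treating the cases $\tau = 1 \oplus \tau'$ and $\tau = \tau' \ominus 1$ separately; no enumeration is ever computed. You instead make the binary encoding explicit, show via \Cref{obs:order-in-wedges} that involvement in $\C$ becomes exactly the subsequence order on $\{L,R\}$-words (your case analysis matches the four rules of the observation precisely, and the induction on $|\pi|+|\sigma|$ with your stated base cases goes through), and then invoke the classical fact, proved by the greedy matching automaton, that the number of length-$N$ binary words avoiding a fixed word as a subsequence depends only on $N$ and that word's length. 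Both arguments rest on the same structural observation about $\C$, but yours yields strictly more: the explicit formula $|\Av_{\C}(\pi) \cap \S_n| = \sum_{j=0}^{|\pi|-2}\binom{n-1}{j}$, from which the Wilf-collapse is immediate, whereas the paper obtains only the existence of a size-preserving bijection. You also dispose of the ``only if'' direction explicitly, which the paper leaves tacit. The trade-off is that the paper's recursive bijection is self-contained and shorter, while your reduction exposes the connection to supersequence counting and gives the enumeration of every principal subclass of $\C$ as a by-product.
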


\begin{proof}
We prove this by induction on $|\pi|$. Obviously, the result is trivial for $|\pi| = 1$. So suppose that $|\pi| = |\tau| = n$ and that the result holds for permutations of length less than $n$. We assume that $\pi = 1 \oplus \pi'$ for some $\pi' \in \C$ (the case $\pi = \pi' \ominus 1$ follows an exactly parallel argument). Then $\Av_{\C}(\pi)$ can be described recursively as a disjoint union:
\[
\Av_{\C}(\pi) = \{1\} \cup \left( 1 \oplus \Av_{\C}(\pi') \right) \cup \left( \Av_{\C}(\pi) \ominus 1 \right).
\]

Suppose first that $\tau = 1 \oplus \tau'$, and let $f$ be a size-preserving bijection between $\Av_{\C}(\pi')$ and $\Av_{\C}(\tau')$ which exists by the inductive hypothesis. Using a similar disjoint union decomposition for $\Av_{\C}(\tau)$  we can define a size-preserving bijection, $g$, between $\Av_{\C}(\pi)$ and $\Av_{\C}(\tau)$ recursively as follows:
\begin{align*}
g(1) &= 1,\\
g(1 \oplus \alpha) &= 1 \oplus f(\alpha) \mbox{, and} \\
g(\alpha \ominus 1) &= g(\alpha) \ominus 1.
\end{align*}

If, on the other hand, $\tau = \tau' \ominus 1$ (and $f$ is still a size-preserving bijection between $\Av_{\C}(\pi')$ and $\Av_{\C}(\tau')$) then we can define $g$ by:
\begin{align*}
g(1) &= 1, \\
g(1 \oplus \alpha) &= f(\alpha) \ominus 1 \mbox{, and} \\
g(\alpha \ominus 1) &= 1 \oplus g(\alpha).
\end{align*}
\end{proof}

\section{$\Av(312, 231)$}
\label{sec:312-231}

Throughout this section $ \mathcal{C} = \Av(312, 321)$. The permutations in $\C$ are called \emph{layered permutations} because they can be described as a sequence of decreasing layers as shown below.
\newcommand{\decbox}[1]{%
\begin{scope}[shift={(#1)}]
   			\draw (0,0) rectangle (1,1);
			\draw[thick] (0.1, 0.9) -- (0.9,0.1);
  		\end{scope}	
}
\begin{center}
	\begin{tikzpicture}[scale=0.5]
		\decbox{0,0}
		\decbox{1,1}
		\decbox{4,4}
		\decbox{5,5}
		\filldraw (2.5,2.5) circle (0.05);
		\filldraw (3,3) circle (0.05);
		\filldraw (3.5,3.5) circle (0.05);
	\end{tikzpicture}
\end{center}

That is, $\C$ is the closure of the class of all decreasing permutations under $\oplus$. To establish this well-known fact, note that the avoidance of $312$ means that all the elements of $\pi$ less than the leftmost element must occur in decreasing order, and the avoidance of $231$ means that all such elements must occur before any element greater than the leftmost element of $\pi$. Therefore $\pi  = \delta \oplus \tau$ for some monotone decreasing permutation $\delta$, and $\tau \in \C$ and then a recursive analysis gives the result above. 

Elements of $\C$ of size $n$ are therefore in one to one correspondence with compositions of $n$ (the sizes of the layers), from which we obtain $c_n = 2^{n-1}$. For the remainder of the section we make no distinction between elements of $\C$ and compositions of $n$ and will represent both as words on the alphabet $\mathbb{N}$. The order relation on compositions $A =  a_1 a_2 \dots, a_k$ and $B = b_1 b_2 \dots, b_m$ that corresponds to involvement in permutations is that $A \leq B$ if there is some subword $b_{i_1} b_{i_2} \dots  b_{i_k}$ of $B$ such that for $1 \leq j \leq k$, $a_j \leq b_{i_j}$. In other words, some subword of $B$ of the same length as $A$ dominates $A$ term by term. This is a somewhat different pattern containment for compositions to that considered in  \cite{Jeli-nek2009Wilf-equivalenc} or \cite{Savage2006Pattern-avoidan} and so the results below do not apply to those contexts.

%
%
We now define the equivalence relation $\sim$ on $\C$ to be the finest equivalence relation on $\C$ which satisfies, for all $P, Q \in \C$ (possibly empty), and all positive integers $a$ and $b$:
\begin{align*}
PabQ &\sim PbaQ \mbox{, and} \\
P11Q &\sim P2Q.
\end{align*}

\begin{proposition}
The relation $\sim$ defined above is sound for Wilf-equivalence on $\C$.
\end{proposition}

\begin{proof}
Since both $\sim$ and $\we$ are equivalence relations it suffices to prove that the two defining cases for $\sim$ are sound for Wilf-equivalence, i.e., that
\begin{align*}
\Av(PabQ) &\we \Av(PbaQ) \mbox{, and} \\
\Av(P11Q) &\we \Av(P2Q).
\end{align*}

For the first part note that $\Av(ab) \we \Av(ba)$ due to an underlying symmetry (reversal of the composition, or taking the reverse and complement of the underlying permutation). Let $r$ denote this symmetry. For a composition $S$, if $PQ \inv S$ let $S = P^L M Q^R$ where $P^L$ is the shortest prefix of $S$ with $P \inv P^L$ and $Q^R$ is the shortest suffix of $S$ with $Q \inv Q^R$. This decomposition exists by the extended version of Proposition \ref{pro:greedy} (and its symmetric version on the right). Note also that it is impossible to have $P a \inv P^L$ or $b Q \inv Q^R$ since a single layer cannot embed two or more layers. Now we define a size-preserving bijection $f: \Av(PabQ) \to \Av(PbaQ)$ as follows:
\[
f(S) =
\left\{
\begin{array}{ll}
S & \mbox{if $S \in \Av(PQ)$,} \\
P^L r(M) Q^R &\mbox{if $PQ \inv S$.}
\end{array}
\right.
\]

For the second part observe that $\Av(2)$ consists solely of the compositions all of whose parts are equal to 1 while $\Av(11)$ consists solely of the compositions having only at most one part. So we can construct a size-preserving bijection $g$ here in much the same fashion as above. If $S$ avoids $PQ$ define $g(S) = S$. If $S \in \Av(P2Q)$ but $PQ \inv S$ then $S = P^L 1 1 1 \cdots 1 Q^R$, and we define $g(S) = P^L n Q^R$ where $n$ is the number of 1s appearing in $S$ between $P^L$ and $Q^R$.
\end{proof}

From the conditions on $\sim$ it is clear that each $\sim$ equivalence class contains a partition having no parts of size 2 (the first condition allows us to permute parts arbitrarily, and the second one allows us to replace any 2s by 1s). Since the number of partitions of $n$ is sub-exponential, while $c_n = 2^{n-1}$, this establishes a Wilf-collapse for $\C$, as the number of $\we$ equivalence classes is at most the number of $\sim$ equivalence classes. In fact we can prove that $\sim$ is complete as well as sound for Wilf-equivalence.

In order to prove this we will work directly with the generating functions of the classes concerned. For classes $\Av_{\C}(A)$ let $F_A$ denote the generating function for the enumeration sequence of $\Av_{\C}(A)$.  Suppose that $A = a B$. A composition belongs to $\Av_{\C}(A)$ if (and only if) one of the following holds:
\begin{itemize}
\item
all of its parts are less than $a$, or 
\item
it consists of zero or more parts less than $a$, followed by one part of size $a$ or greater, followed by a (possibly empty) sequence of parts which are in $\Av_{\C}(B)$.
\end{itemize}

Passing to generating functions and using the basic techniques of symbolic combinatorics (\cite{Flajolet2009Analytic-combin}) we get:
\begin{equation}
\label{eqn:gf-recurrence-first-layer}
F_{a B} = \frac{1}{1 - t - t^2 - \cdots - t^{a-1}} \left(  1 + \frac{t^a F_{B}}{1-t} \right)
\end{equation}
Note also that this implies:
\[
t^a F_B = \left( \left( 1 - t - t^2 - \cdots - t^{a-1} \right) F_A  - 1 \right) (1-t) .
\]

From this it follows immediately that:
\begin{lemma}
\label{lem:cancel-first-part}
In $\C$, $a A \we a B$ if and only if $A \we B$.
\end{lemma}

To finish the proof of completeness we need one more observation. For a positive integer $a$, let $r_a$ denote the least positive solution of:
\[
1 - t - t^2 - \cdots - t^{a-1} = 0.
\]

\begin{lemma}
\label{lem:growth-rate}
Let $a > 2$ be a positive integer, and let $B$ be a partition whose greatest part, $b$, is less than $a$. Then $\lim_{t \to r_a^{-}} F_B$ exists and is finite. On the other hand, for any partition $A = a A'$, $\lim_{t \to r_a^{-}} F_A = \infty$.
\end{lemma}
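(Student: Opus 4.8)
The plan is to prove both halves of the lemma from the single generating-function recurrence \eqref{eqn:gf-recurrence-first-layer}, after first recording the sign behaviour near $r_a$ of the denominators $P_c(t) := 1 - t - t^2 - \cdots - t^{c-1}$. The one algebraic fact I would isolate at the outset is the closed form $P_c(t) = (1 - 2t + t^c)/(1-t)$ for $t \neq 1$; writing $g_c(t) = 1 - 2t + t^c$, the number $r_c$ is the least positive root of $g_c$. From $g_c(0)=1$ and the definition of $r_a$ one reads off $g_a(s) \geq 0$ on $(0, r_a]$ (vanishing only at $r_a$), and since $s^c > s^a$ for $s \in (0,1)$ whenever $c < a$, this yields the comparison $g_c(s) > g_a(s) \geq 0$, hence $P_c(s) > 0$, for all $s \in (0, r_a]$ and all $c < a$. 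Because $a > 2$ forces $r_a < 1$, the factor $1 - t$ is also bounded away from $0$ near $r_a$. These are the only analytic inputs I need.

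The technical heart is an auxiliary claim, proved by induction on the number of parts: for every partition $C$ all of whose parts are at most $a$, the rational function $F_C$ has no pole in $(0, r_a)$ and satisfies $F_C(t) \geq 0$ there. The base case $C = \varnothing$ is $F_\varnothing = 0$. For the step I would write $C = cC'$ with $c \leq a$ the largest part and apply \eqref{eqn:gf-recurrence-first-layer}: on $(0, r_a)$ we have $P_c > 0$ (strictly, by the sign fact when $c < a$, and because $r_a$ is the least positive root of $P_a$ when $c = a$), $1 - t > 0$, and $F_{C'} \geq 0$ by induction since $C'$ is shorter with parts $\leq c \leq a$; thus $F_C = P_c^{-1}\bigl(1 + t^c F_{C'}/(1-t)\bigr)$ is a pole-free, strictly positive expression on $(0, r_a)$. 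By Pringsheim's theorem this also shows that the radius of convergence of $F_C$ is at least $r_a$, so on $(0, r_a)$ the rational function genuinely equals the convergent non-negative enumeration series.

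With the claim in hand both statements are short. For the first, if $B$ has greatest part $b < a$ then every denominator arising in the recursive evaluation of $F_B$ is some $P_c$ with $c \leq b < a$, together with factors $1 - t$; by the sign fact and $r_a < 1$ none of these vanishes on $[0, r_a]$, so $F_B$ is analytic at $r_a$ and $\lim_{t \to r_a^-} F_B = F_B(r_a)$ is finite. For the second, write $A = aA'$ and apply the recurrence once: as $t \to r_a^-$ we have $P_a(t) \to 0^+$, so $P_a(t)^{-1} \to +\infty$, while the bracketed factor $1 + t^a F_{A'}(t)/(1-t)$ is at least $1$ throughout $(0, r_a)$ because $t^a > 0$, $1 - t > 0$, and $F_{A'} \geq 0$ there by the claim (note $A'$ has parts $\leq a$). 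Hence $F_A(t) \geq P_a(t)^{-1} \to +\infty$.

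The step I expect to require the most care is the auxiliary claim, specifically the non-negativity of $F_{A'}$ on $(0, r_a)$ when $A'$ itself contains a part equal to $a$: in that case $F_{A'}$ may already diverge at $r_a$, so one cannot simply evaluate it, and it is essential that the bound $1 + t^a F_{A'}/(1-t) \geq 1$ survives purely from the sign of $F_{A'}$ on the open interval rather than from any finiteness at the endpoint. Establishing that positivity cleanly — through the inductive positive-combination structure of \eqref{eqn:gf-recurrence-first-layer} rather than through analytic estimates — is what makes the blow-up argument go through regardless of whether $F_{A'}$ is finite or infinite at $r_a$.
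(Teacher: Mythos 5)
Your proof is correct, and at its core it takes the same route as the paper: both halves are read off from Equation~\ref{eqn:gf-recurrence-first-layer}, the first by an induction showing that the denominators $1 - t - t^2 - \cdots - t^{c-1}$ with $c < a$ stay strictly positive on $(0, r_a]$, the second by letting the factor $1/(1 - t - \cdots - t^{a-1})$ blow up while the bracketed factor stays bounded below by $1$. Your sign analysis via the closed form $1 - 2t + t^c$ is a cosmetic variant of the paper's argument, which gets the same positivity from the identity $1 - r_a - \cdots - r_a^{b-1} = r_a^b + r_a^{b+1} + \cdots + r_a^{a-1} > 0$ (a direct consequence of $r_a$ being a root of $1 - t - \cdots - t^{a-1}$). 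The one genuine difference is your auxiliary claim that $F_C$ is pole-free and non-negative on $(0, r_a)$ for \emph{every} partition $C$ with parts at most $a$, proved by induction on the number of parts. The paper simply asserts that the second factor is ``positive and greater than 1'' with no justification, and, as you correctly identify, this is exactly the point needing care: when $A'$ itself contains a part of size $a$, the function $F_{A'}$ diverges at $r_a$, so one cannot argue by evaluating it there; only its sign on the open interval is available. Your inductive positivity claim closes that small but real gap (and your Pringsheim remark, tying the rational function to the convergent series on $(0,r_a)$, is the right way to make the limit statement about the generating function itself legitimate). The cost is one extra induction; what it buys is a treatment of the divergent case that the paper's proof glosses over.
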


\begin{proof}
We prove the first part by induction on the greatest part of $B$ and the sum of $B$. The base case $B = 1$ is trivial since $F_1 = 1$. So suppose the result holds for all partitions having lesser largest part, or the same largest part and lesser sum than $B$ does. Let $B = bC$. Then Equation \ref{eqn:gf-recurrence-first-layer} (applied to $bC$), the inductive hypothesis, and the fact that
\[
1 - r_a \leq 1 - r_a - r_a^2 - \cdots - r_a^{b-1} = r_a^b + r_a^{b+1} + \cdots + r_a^{a-1} > 0
\]
imply that the result holds for $B$.

For the second part, consider Equation \ref{eqn:gf-recurrence-first-layer} applied to $a A'$. As $t \to r_A^{-}$ the first factor tends to $\infty$ while the second factor is positive and greater than 1. So the product tends to infinity.
\end{proof}

Now we obtain:

\begin{theorem}
\label{thm:completeness-c4}
The relation $\sim$ is sound and complete for Wilf-equivalence in $\C$.
\end{theorem}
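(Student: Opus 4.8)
The plan is to establish completeness by showing that distinct $\sim$-classes yield distinct generating functions, using the canonical partition representatives. By the soundness result already proven, each $\sim$-class contains a partition with no parts equal to $2$ (equivalently, having parts only of size $1$ or size $\geq 3$, after sorting), and we may reduce to comparing two such canonical partitions $A$ and $B$. By \Cref{lem:cancel-first-part} we may repeatedly cancel common initial parts, so after reordering (using $PabQ \sim PbaQ$) we may assume that $A$ and $B$ share no common parts and that at least one of them is nonempty. The goal is then to prove that $F_A \neq F_B$ whenever $A \not\sim B$, which by the canonical form reduces to proving that two distinct part-$2$-free partitions have distinct enumeration generating functions.

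First I would set up the reduction carefully. Given $A \not\sim B$, pass to their canonical part-$2$-free representatives $A^\ast$ and $B^\ast$; these are genuinely different partitions (as multisets). Then strip off all common parts one at a time via \Cref{lem:cancel-first-part} (this is legitimate because $aA \we aB \iff A \we B$, and sorting is free under $\sim$), leaving partitions $A'$ and $B'$ with disjoint part-multisets, not both empty. Without loss of generality $A'$ is nonempty; let $a$ be its largest part, and note $a \geq 3$ (since $A'$ has no part equal to $2$ and if its only parts were $1$'s then after cancellation $B'$ could share nothing, forcing a contradiction with the disjointness and the canonical form — I would spell out this bookkeeping). The crucial point is that $a$ does not appear as a part of $B'$, and in fact $a$ exceeds every part of $B'$ or at least is handled by the largest-part analysis.

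The heart of the argument is \Cref{lem:growth-rate}, which provides a spectral separation: the generating function $F_A$ blows up at $t = r_a$ precisely when $a$ appears (as a leading part), whereas $F_B$ stays finite at $r_a$ when all parts of $B$ are smaller than $a$. Concretely, I would choose $a$ to be the largest part occurring in either $A'$ or $B'$ (say it occurs in $A'$ but not $B'$, using disjointness), and evaluate the limit as $t \to r_a^-$. By \Cref{lem:growth-rate}, $F_{A'} \to \infty$ while $F_{B'}$ remains finite, so $F_{A'} \neq F_{B'}$; combined with the cancellation lemma this gives $F_A \neq F_B$, hence $A \not\we B$. This is the contrapositive of completeness.

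The main obstacle I anticipate is the reduction bookkeeping rather than the analytic separation. One must ensure that after canonicalization and cancellation the residual partitions really are disjoint and that the largest surviving part lies in exactly one of them, so that \Cref{lem:growth-rate} applies cleanly with a single distinguished value $r_a$. A subtlety is that \Cref{lem:growth-rate} as stated requires $a > 2$ and treats $B$ as a partition whose largest part is strictly less than $a$; I would need to confirm that the canonical (part-$2$-free) form guarantees the largest differing part is at least $3$, so that $r_a$ is a legitimate finite positive root of $1 - t - \cdots - t^{a-1} = 0$ with the needed monotonicity. Once that combinatorial normalization is pinned down, the singularity comparison at $r_a$ finishes the proof immediately.
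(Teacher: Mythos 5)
Your proposal uses exactly the paper's ingredients---canonical part-$2$-free representatives, \Cref{lem:growth-rate} to detect the largest part analytically, and \Cref{lem:cancel-first-part} (together with reordering under $\sim$) to cancel---but reorganizes them: the paper runs an induction in which \Cref{lem:growth-rate} first forces the two largest parts to coincide, then cancels that single part and recurses, always working with partitions of the same sum; you instead cancel \emph{all} common parts at once and apply \Cref{lem:growth-rate} a single time to the disjoint residuals. The reorganization is legitimate, but it creates the one edge case your sketch mishandles: after cancellation, one residual can be empty while the other consists entirely of $1$'s (take canonical representatives $111$ and $1$, say), so the largest surviving part is $1$, \Cref{lem:growth-rate} (which requires $a > 2$) is unavailable, and the ``contradiction with disjointness'' you invoke simply is not there. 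The fix is one line and worth stating explicitly: Wilf-equivalence forces equal sums, since every composition of sum less than the sum of $A$ avoids $A$, while exactly one composition of sum equal to that of $A$ (namely $A$ itself) involves it, so if the sums differ then $F_A$ and $F_B$ already disagree at the smaller sum. Granting equal sums, the residuals after cancellation have equal sums too, hence are both empty (whence $A = B$) or both nonempty; in the latter case disjointness plus $2$-freeness forces the overall maximum part to be at least $3$ and to lie in exactly one residual, and your singularity comparison at $r_a$ then concludes. The paper's inductive organization never meets this degenerate case, which is what it buys over your version; your one-shot reduction is otherwise equivalent and arguably cleaner to state once the equal-sums observation is in place.
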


\begin{proof}
As soundness has already been established we need prove only completeness. The partitions having no parts of size 2 form a set of equivalence class representatives for $\sim$, so it suffices to prove that if $A$ and $B$ are partitions of the same positive integer having no parts of size 2, then $F_A = F_B$ implies $A = B$. We establish this by induction, noting that the case where the greater of the two greatest parts of $A$ and $B$ is 1 is trivial.

So suppose that $A$ and $B$ are partitions having no parts of size 2, that $F_A = F_B$, that at least one of $A$ and $B$ has a part of size greater than 2, and that the result holds for all partitions of lesser integers. Without loss of generality suppose that the greatest part, $a$, of $A$ is at least as great as that of $B$. Since $F_A = F_B$, by Lemma \ref{lem:growth-rate}, $a$ must also be the greatest part of $B$. But then by Lemma \ref{lem:cancel-first-part} and induction $A = B$ as required.
\end{proof}

\section{$\Av(312, 321)$}
\label{sec:312-321}

Throughout this section $ \mathcal{C} = \Av(312, 321)$. The permutations in $\C$ can be described as sums of individual components of the form $\iota \ominus 1$ with $\iota$ an increasing permutation (possibly empty) as shown below.
\newcommand{\cycbox}[1]{%
\begin{scope}[shift={(#1)}]
   			\draw (0,0) rectangle (1,1);
			\draw[thick] (0.1, 0.25) -- (0.75,0.9);
			\filldraw (0.85,0.15) circle (0.05);
  		\end{scope}	
}
\begin{center}
	\begin{tikzpicture}[scale=0.5]
		\cycbox{0,0}
		\cycbox{1,1}
		\cycbox{4,4}
		\cycbox{5,5}
		\filldraw (2.5,2.5) circle (0.05);
		\filldraw (3,3) circle (0.05);
		\filldraw (3.5,3.5) circle (0.05);
	\end{tikzpicture}
\end{center}

Once again there is an obvious correspondence between the elements of $\C$ of size $n$ and compositions of $n$. However, with respect to avoidance, parts of size 1 behave very differently from parts of other sizes since a part of size $k+1$ (i.e.~corresponding to a summand $23\cdots(k+1)1$) can involve up to $k$ successive parts of size 1. So, in representing the permutations of $\C$ we distinguish between these two types of parts.

Any permutation in $\mathcal{C} = \Av(312, 321)$ is the sum of strictly increasing sequences $a^i = 1 2 \ldots i$ ($i \geq 1$) and sequences of the form $b_j = 2 3 \ldots j 1$ ($j \geq 2$). For example, the permutation $\pi = 213467859$ can be represented as $21 \oplus 12 \oplus 2341 \oplus 1 = b_2 \oplus a^2 \oplus b_4 \oplus a^1$. So there is a one to one correspondence between the permutations in $\C$ and words over the alphabet consisting of symbols $a^i$ and $b_j$ not containing consecutive $a$'s. As in the previous section we henceforth make no distinction between the elements of $\C$ and such words. Further we use upper case letters to refer to such words. A slightly modified form of Proposition \ref{pro:greedy} applies to $\C$ with respect to this representation (we cannot be sure that a single letter of one word will be involved in a single letter of another since for instance $a^3 \inv b_2 b_3$ without being involved in either one -- however it is still the case that in testing for involvement of one word in another we can do so greedily from left to right or right to left).

We now define the equivalence relation $\sim$ on $\C$ to be the finest equivalence relation on $\C$ which satisfies
five rules -- four of which can be thought of as local conditions, and one as a global one:
\begin{align*}
b_i b_j & \sim b_j b_i  \\
a^i b_j & \sim  b_j a^i  \\
a^i b_j a^k & \sim a^k b_j a^i \\
b_2 b_k & \sim a^1 b_k a^1 \\
A  \sim B & \Rightarrow P A Q \sim P B Q \\
\end{align*}
assuming in the final case that $P$ and $Q$ are arbitrary words subject only to the condition that $PAQ$ and $PBQ$ contain no consecutive $a$'s.

\begin{proposition}
The relation $\sim$ defined above is sound for Wilf-equivalence on $\C$.
\end{proposition}

\begin{proof}
The first three conditions are sound for Wilf-equivalence because they represent correspondences introduced by reversing the word representation of a permutation in $\C$. With respect to permutations this corresponds to the automorphism of $\S$ that fixes $231$, namely a reflection through a diagonal running from SE to NW. So the corresponding classes are trivially Wilf-equivalent.

Although it is possible to provide a bijective proof of the soundness of the fourth condition it seems a little simpler to work directly with generating functions in this case. For a word $X$ let $F_X$ denote the generating function of $\Av_{\C}(X)$. Then we have:
\begin{align*}
F_{b_2 b_k} &= 1 + t F_{b_2 b_k} + \frac{t^2}{1-t} F_{b_k} \\
\Rightarrow (1-t) F_{b_2 b_k} &= 1 + t F_{b_2 b_k} + \frac{t^2}{1-t} F_{b_k} \\
\Rightarrow F_{b_2 b_k} &= \frac{1}{1-t} + \frac{t^2}{(1-t)^2} F_{b_k}. 
\end{align*}
The first equation above arises from recognising that the permutations in $\Av(b_2 b_k)$ consist of: the empty permutation, any permutation whose first summand is 1 and the remainder of which avoids $b_2 b_k$, and any permutation whose first summand contains a decrease (there is a unique such summand, $b_j$, for every $j \geq 2$)  and whose remaining summands avoid $b_k$. The following two lines are simple algebraic rearrangements.

On the other hand permutations in $\Av(a^1 b_k a^1)$ are either empty, consist of a single summand, or consist of two or more summands in which case the summands except for the first and last must avoid $b_k$. Collecting the first two possibilities into one and noting there is a unique possible summand of every size greater than or equal to 1 we get:
\[
F_{a^1 b_k a^1} = \frac{1}{1-t} + \frac{t^2}{(1-t)^2} F_{b_k}.
\]
Hence, $F_{b_2 b_k} = F_{a^1 b_k a^1}$ as claimed.

The final condition is sound due to being able to recognise involvement greedily. Namely, to construct a size-preserving bijection $g : \Av_{\C}(PAQ) \to \Av_{\C}(PBQ)$ given such a bijection $f : \Av_{\C}(A) \to \Av_{\C}(B)$ we simply fix all permutation in $\Av_{\C}(PQ)$, while writing words $W$ involving $PQ$ in the form $W = P^L X Q^R$ where $P^L$ is the shortest prefix of $W$ involving $P$ and $Q^R$ the shortest suffix of $W$ involving $Q$ and then setting $g(W) = P^L f(X) Q^R$. The condition that neither $PAQ$ nor $PBQ$ contains consecutive $a$s implies that no letters can share an involvement of parts of $P$ and $A$, nor of parts of $A$ and $Q$ (and likewise with $B$) and so for $P^L X Q^R$ to avoid $PAQ$ it is both necessary and sufficient that $X$ avoid $A$.
\end{proof}

The upshot of all these rules is that any two words $P$ and $Q$ whose underlying multisets of letters are the same are definitely $\sim$ equivalent, and we may also ``trade'' occurrences of $b_2$ for a pair of $a^1$s or vice versa provided that we do not introduce consecutive $a$s. We could nominate as the representative of a $\sim$ equivalence class a pair of partitions, the first representing the indices held on the $b$s, and the second the one held on the $a$s subject to the condition that the length of the second partition is not more than one greater than the length of the first (so that the $a$s can be inserted without creating consecutive $a$s) and the second partition contains at most one $1$. This makes it clear that $\C$ again exhibits a Wilf-collapse as the generating function for pairs of partitions dominates that for the representatives described above and still exhibits sub-exponential growth.

Before embarking on the proof of completeness for $\sim$ a little more preparatory work is required. For $n
 \geq 0$ define the polynomial $p_n(t)$ to be the generating function for the permutations in $\C$ having a longest increasing subsequence of length exactly $n$. These generating functions are polynomials because every permutation in $\C$ avoids $321$ and hence, if it is of length greater than $2n$ contains an increasing subsequence of length at least $n+1$.

\begin{proposition}
The polynomials $p_n(t)$ satisfy:
\begin{align*}
p_0(t) &= 1 \\
p_1(t) &= t + t^2 \\
p_n(t) &= (2t + t^2) p_{n-1}(t) - t^2 p_{n-2}(t) \: \mbox{, for $n \geq 2$.}
\end{align*}
\end{proposition}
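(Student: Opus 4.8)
The plan is to encode the length of the longest increasing subsequence with a second variable and read the recurrence off a rational bivariate generating function. Write $\ell(\sigma)$ for the length of a longest increasing subsequence of $\sigma$, so that $p_n(t) = \sum_{\ell(\sigma) = n} t^{|\sigma|}$. The first observation is that $\ell$ is additive under $\oplus$: in $\pi \oplus \tau$ every point of $\tau$ lies above and to the right of every point of $\pi$, so an increasing subsequence is exactly the concatenation of one in $\pi$ with one in $\tau$, whence $\ell(\pi \oplus \tau) = \ell(\pi) + \ell(\tau)$. Consequently, if a permutation of $\C$ is written as a word $c_1 c_2 \cdots c_m$ in the alphabet of $\oplus$-indecomposable components, then $\ell$ is the sum of the contributions of the individual letters. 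A direct inspection of the two families of letters shows that $a^i = 12\cdots i$ has size $i$ and contributes $i$, while $b_j = 23 \cdots j\,1$ has size $j$ and contributes $j-1$ (the terminal $1$ cannot extend any increasing run).

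Next I would form the bivariate generating function $P(t,u) = \sum_{n \ge 0} p_n(t) u^n$ in which $t$ marks size and $u$ marks $\ell$. By the previous paragraph the letters have generating functions $\alpha = \sum_{i \ge 1} (tu)^i = tu/(1-tu)$ for the $a$-family and $\beta = \sum_{j \ge 2} t^j u^{j-1} = t^2 u/(1-tu)$ for the $b$-family. The only structural constraint on words of $\C$ is that no two $a$-letters be adjacent, so I would decompose a word by its $b$-letters: a word with $k$ $b$-letters is determined by those letters together with an independent choice, in each of the $k+1$ gaps around them, of either nothing or a single $a$-letter. This is a clean bijection that automatically enforces the no-consecutive-$a$ condition, and it yields
\[
P(t,u) = (1+\alpha) \sum_{k \ge 0}\bigl(\beta(1+\alpha)\bigr)^k = \frac{1+\alpha}{1 - \beta(1+\alpha)}.
\]
Using $1 + \alpha = 1/(1-tu)$ and simplifying, this collapses to
\[
P(t,u) = \frac{1-tu}{(1-tu)^2 - t^2 u} = \frac{1 - tu}{1 - (2t + t^2)u + t^2 u^2}.
\]

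Finally I would clear the denominator, writing $P(t,u)\bigl(1 - (2t+t^2)u + t^2 u^2\bigr) = 1 - tu$, and compare coefficients of $u^n$ on both sides. The constant term gives $p_0 = 1$; the coefficient of $u^1$ gives $p_1 - (2t+t^2)p_0 = -t$, whence $p_1 = t + t^2$; and for every $n \ge 2$ the right-hand side contributes nothing, so $p_n - (2t+t^2)p_{n-1} + t^2 p_{n-2} = 0$, which is the stated recurrence. The only genuinely delicate step is the generating-function bookkeeping for the no-consecutive-$a$ constraint: one must be sure the gap decomposition counts each admissible word exactly once, and that the two local weights for size and for $\ell$ are tracked consistently through the geometric series. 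Everything after the closed form for $P$ is routine algebra, and as a sanity check the recurrence reproduces $p_2 = t^2 + 3t^3 + t^4$, which one can confirm by hand.
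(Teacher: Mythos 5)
Your proof is correct, and it takes a genuinely different route from the paper. The paper argues directly on the recurrence: it constructs each permutation with longest increasing subsequence exactly $n$ from one with longest increasing subsequence $n-1$ by one of three operations (appending a final summand $1$, lengthening a final summand other than $1$, or appending a final summand $21$), and obtains the term $t\left(p_{n-1}(t) - t\,p_{n-2}(t)\right)$ for the middle case by subtracting off those permutations whose final summand is $1$; summing the three contributions gives the recurrence with no generating-function machinery beyond the polynomials themselves. You instead run the symbolic method on the word encoding (letters $a^i$ and $b_j$ with no consecutive $a$'s), using the gap decomposition around the $b$-letters to enforce the adjacency constraint, and arrive at the closed form
\[
P(t,u) \;=\; \frac{1-tu}{1-(2t+t^2)u+t^2u^2},
\]
from which the initial conditions and the recurrence fall out by comparing coefficients of $u^n$. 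Your key steps all check out: additivity of the longest increasing subsequence under $\oplus$, the weights $t^iu^i$ for $a^i$ and $t^ju^{j-1}$ for $b_j$, the bijectivity of the gap decomposition, and the algebra (and your sanity check $p_2 = t^2+3t^3+t^4$ is what the paper's recurrence gives as well). What the paper's argument buys is brevity and elementarity, though it requires some care to see that the three operations are disjoint and exhaustive (the subtraction $p_{n-1}-t\,p_{n-2}$ is exactly where double counting is avoided); what yours buys is the explicit rational form of $P(t,u)$, which packages the whole proposition mechanically, avoids any inclusion--exclusion over cases, and makes the later connection to Chebyshev-type recurrences (the polynomials $q_n$ satisfying $q_n = (2+t)q_{n-1}-q_{n-2}$) essentially visible in the denominator.
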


\begin{proof}
The first two equalities are immediate. For the third note that a permutation in $\C$ whose longest increasing subsequence has length $n$ can be obtained from one whose longest increasing subsequence has length $n-1$ by: adding a final summand equal to $1$, lengthening a final summand (other than 1) by 1, or adding a final summand equal to $21$. The number of permutations that can be lengthened in the second case is $p_{n-1}(t) - tp_{n-2}(t)$ since they correspond to those permutations enumerated by $p_n(t)$ whose final summand is not 1 (enumerated by $t p_{n-1}(t)$). Thus:
\[
p_n(t) = t p_{n-1}(t) + t (p_{n-1}(t) - t p_{n-2}(t)) + t^2 p_{n-1}(t) = (2t + t^2) p_{n-1}(t) - t^2 p_{n-2}(t)
\]
as claimed.
\end{proof}

Noting that the minimum length of a permutation enumerated by $p_n(t)$ is equal to $n$ we define the polynomials $q_n(t)$ such that $p_n(t) = t^n q_n(t)$. Then the recurrence for these polynomials is:
\[
q_n(t) = (2 + t) q_{n-1}(t) - q_{n-2}(t).
\]

It is easy to verify that the $q_n$ are directly related to the Chebyshev polynomials $U_{2n}$ according to the formula:
\[
q_n(t) = U_{2n} ( (-t)^{1/2}/2).
\]
However, to make use of them in the completeness proof we need only one basic fact concerning the behaviour of their roots.

\begin{proposition}
\label{pro:increasing-roots}
For $n \geq 1$ let $r_n$ denote the greatest real root of $q_n(t)$. Then the sequence $r_n$ is strictly increasing, bounded above by 0, and for all $n \geq 2$, $r_n > -1/2$.
\end{proposition}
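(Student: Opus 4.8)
The plan is to exploit the stated identification $q_n(t) = U_{2n}((-t)^{1/2}/2)$ with the Chebyshev polynomials of the second kind, whose roots are completely explicit, rather than to grind through the recurrence directly. Recall that $U_{2n}$ has $2n$ simple real roots, namely $\cos(k\pi/(2n+1))$ for $k = 1, \dots, 2n$, and that it is an even function since its degree is even. First I would observe that the substitution $x = (-t)^{1/2}/2$, equivalently $t = -4x^2$, sends each root $x_k = \cos(k\pi/(2n+1))$ of $U_{2n}$ to a root $t_k = -4\cos^2(k\pi/(2n+1))$ of $q_n$. As $k$ ranges over $1, \dots, n$ the angle $k\pi/(2n+1)$ ranges over $(0,\pi/2)$, on which $\cos^2$ is strictly decreasing and hence injective; this produces $n$ distinct, strictly negative values $t_k$ (none is zero, since $k\pi/(2n+1) = \pi/2$ would force $2k = 2n+1$). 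Since $q_n$ has degree exactly $n$ — because $p_n$ has degree $2n$ and $q_n = p_n/t^n$ — these are all of its roots. In particular every root of $q_n$ is real and negative, which already gives the bound $r_n < 0$.

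Next I would identify the greatest root. It corresponds to the smallest value of $\cos^2(k\pi/(2n+1))$, i.e.\ to the angle closest to $\pi/2$, which is attained at $k = n$. Writing $\theta_n = n\pi/(2n+1) \in (0,\pi/2)$ we obtain the closed form $r_n = -4\cos^2\theta_n$. From here all three assertions follow by elementary estimates. For monotonicity, a direct computation gives $\theta_{n+1} - \theta_n = \pi/\bigl((2n+1)(2n+3)\bigr) > 0$, so $\theta_n$ increases strictly toward $\pi/2$; since $\cos$ is positive and strictly decreasing on $(0,\pi/2)$, the quantity $\cos^2\theta_n$ strictly decreases, and hence $r_n = -4\cos^2\theta_n$ strictly increases. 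This confirms that $(r_n)$ is strictly increasing and bounded above by $0$.

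It then remains only to check the lower bound $r_n > -1/2$ for $n \geq 2$, and here the restriction to $n \geq 2$ is explained by the base values: $r_1 = -4\cos^2(\pi/3) = -1 < -1/2$, so the bound genuinely fails at $n = 1$. By monotonicity it suffices to verify the bound in the base case $n = 2$. There $\theta_2 = 2\pi/5$ and $\cos(2\pi/5) = (\sqrt5 - 1)/4$, so $\cos^2\theta_2 = (3-\sqrt5)/8$ and $r_2 = (\sqrt5 - 3)/2$; since $\sqrt5 > 2$ this is strictly greater than $-1/2$, and then $r_n \geq r_2 > -1/2$ for all $n \geq 2$.

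The only point requiring genuine care is the root transfer in the first step: one must confirm that $q_n$ really is a polynomial in $t$ of degree $n$ and that the even symmetry of $U_{2n}$ makes $t = -4x^2$ a well-defined, root-preserving substitution with the correct count, so that the $n$ explicit values exhaust the roots of $q_n$. Once that bookkeeping is in place the remaining trigonometric estimates are routine. Should one prefer to avoid invoking the Chebyshev identity, an alternative is to argue directly from the three-term recurrence $q_n = (2+t)q_{n-1} - q_{n-2}$, establishing by induction (in the manner of a Sturm sequence for orthogonal polynomials) that the roots of $q_n$ and $q_{n-1}$ are real, negative and interlace, which forces the greatest roots to increase; but the explicit route above is shorter and also pins down the limiting value $0$.
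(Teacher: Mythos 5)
Your proof is correct, but it takes a genuinely different route from the paper's. The paper never actually uses the Chebyshev identity: its proof is exactly the short induction you sketch in your closing remark. All real roots of $q_n$ are negative because the coefficients are non-negative with constant term $1$; then $q_n(r_{n-1}) = (2+r_{n-1})\,q_{n-1}(r_{n-1}) - q_{n-2}(r_{n-1}) = -q_{n-2}(r_{n-1}) < 0$ (using $r_{n-1} > r_{n-2}$, so that $q_{n-2}$ is positive there), which together with $q_n(0) = 1 > 0$ yields a root of $q_n$ strictly between $r_{n-1}$ and $0$, hence $r_n > r_{n-1}$. Your route buys more: the closed form $r_n = -4\cos^2\bigl(n\pi/(2n+1)\bigr)$ shows all roots are real, simple and negative, identifies the limit $0$, and, notably, corrects an arithmetic slip in the paper, which asserts $r_2 = \sqrt{5}-2$ (a positive number, contradicting the negativity of the roots); the true value, which you compute, is $(\sqrt{5}-3)/2$, as one checks directly from $q_2(t) = t^2 + 3t + 1$. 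One caveat: your argument leans on the displayed identity $q_n(t) = U_{2n}((-t)^{1/2}/2)$, which the paper states without proof and which, as written, is off by a factor of $(-1)^n$ (for $n=1$ one has $U_2((-t)^{1/2}/2) = -t-1 = -q_1(t)$). The sign does not affect the root correspondence, so your conclusions stand, but a self-contained write-up should verify the corrected identity $q_n(t) = (-1)^n U_{2n}((-t)^{1/2}/2)$, for instance by checking that both sides satisfy $q_n = (2+t)q_{n-1} - q_{n-2}$ with the same initial conditions. The paper's induction, by contrast, is entirely self-contained, using nothing beyond the recurrence and the sign of the coefficients.
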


\begin{proof}
As $q_n(t)$ has non-negative coefficients and constant term 1, all real roots must be negative. Furthermore $-1 = r_1 < r_2 = \sqrt{5} - 2$, so $r_2 > -1/2$. Now supposing that the result holds inductively for all $k < n$ note that
\[
q_n(r_{n-1}) = (2 + r_{n-1}) q_{n-1}(r_{n-1}) - q_{n-2}(r_{n-1}) < 0
\]
since $q_{n-1}(r_{n-1}) = 0$ and $q_{n-2}(r_{n-1}) > 0$ as $r_{n-1} > r_{n-2}$. Therefore $q_n(t)$ has a root strictly between $r_{n-1}$ and $0$ and hence $r_n > r_{n-1}$.
\end{proof}

We will also need a cancellation lemma. For convenience in the remainder of this section define $I_A(t)$ to be the generating function for permutations in $\C$ that \emph{involve} (rather than avoid) $A$.

\begin{lemma}
\label{lem:cancel}
In $\C$, $a^i A \we a^i B$ if and only if $A \we B$ (assuming neither $A$ nor $B$ begins with $a$) and $b_j A \we b_j B$ if and only if $A \we B$.
\end{lemma}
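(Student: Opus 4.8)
The plan is to mimic the generating-function strategy that worked for the layered case (Lemma \ref{lem:cancel-first-part}), now in the setting of $\C = \Av(312,321)$ with its two-letter alphabet $\{a^i, b_j\}$. The first step is to establish the recurrences for $F_{a^i A}$ and $F_{b_j A}$ by a greedy decomposition of the words avoiding $a^i A$ (respectively $b_j A$), entirely parallel to the derivation of \eqref{eqn:gf-recurrence-first-layer}. Concretely, a word avoids $a^i A$ if and only if either it contains no letter capable of "absorbing" the leading $a^i$, or it has such a first absorbing letter after which the remaining suffix avoids $A$. The subtlety here, and the first place the two cases diverge, is that the leading $a^i$ need not be absorbed by a single letter: as noted in the text, $a^3 \inv b_2 b_3$ without being involved in either part. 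So for the $a^i$ case I would first determine, via Proposition \ref{pro:greedy} applied greedily from the left, exactly which prefixes of a word can accommodate a leading $a^i$, and encode the generating function of these "$a^i$-absorbing prefixes" as an explicit rational function $G_i(t)$ of the form $\tfrac{1}{1-t}\cdot(\text{something})$; the $b_j$ case is simpler since $b_j$ can only be absorbed by a single letter $b_\ell$ with $\ell \ge j$ (an increasing summand $a^k$ cannot contain a descent), so the absorbing-prefix structure there is cleaner.

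Having obtained the two recurrences, each should take the schematic shape $F_{cA} = R_c(t) + S_c(t)\, F_A$ for explicit rational functions $R_c, S_c$ depending only on the leading letter $c \in \{a^i, b_j\}$, with $S_c(t)$ a nonzero rational function (its leading term will be $t^i$ or $t^j$ up to a factor of $1/(1-t)$). From this the lemma is immediate algebraically: solving for $F_A$ gives $F_A = \bigl(F_{cA} - R_c\bigr)/S_c$, so $F_{cA} = F_{cB}$ if and only if $F_A = F_B$, which is exactly $cA \we cB \iff A \we B$. The bookkeeping hypothesis "neither $A$ nor $B$ begins with $a$" in the $a^i$ case is there precisely so that $a^i A$ and $a^i B$ are legal words (no consecutive $a$s), and one should check that the decomposition of $\Av_\C(a^i A)$ used to build $R_{a^i}, S_{a^i}$ respects this constraint.

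I expect the main obstacle to be the $a^i$ recurrence, specifically pinning down $G_i(t)$ and the resulting $R_{a^i}, S_{a^i}$ correctly, because absorption of $a^i$ can spill across several consecutive letters. The cleanest way to control this is to argue that the shortest prefix $P^L$ of a word $W$ with $a^i \inv P^L$ is governed entirely by how many successive parts of size $1$ (together with the "spare capacity" of $b$-parts and $a$-parts) accumulate enough increasing length to reach $i$; the greedy version of Proposition \ref{pro:greedy} guarantees that once this minimal absorbing prefix is identified, the requirement on the rest of $W$ is precisely that the suffix avoid $A$, with no sharing of letters between the embedding of $a^i$ and the embedding of $A$. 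Once that structural claim is in hand, passing to generating functions via the symbolic method is routine, and the factored form $F_{cA} = R_c + S_c F_A$ with $S_c \ne 0$ delivers both halves of the lemma at once.
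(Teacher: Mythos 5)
Your plan is, in outline, the paper's own proof: the paper also argues through generating functions (it uses the involvement series $I_X(t)$ rather than the avoidance series $F_X(t)$, an immaterial complementation), it uses the same greedy minimal-prefix decomposition with the same no-sharing observations, and it concludes by cancellation from relations of exactly your schematic shape, namely $I_{a^i A}(t) = p_i(t)I_A(t)/(1-t)$ and Equation~\ref{eqn:rec-for-bj} for the $b_j$ case. So your target, an identity $F_{cA} = R_c + S_c F_A$ with $S_c \neq 0$ depending only on the leading letter, is the right one.

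However, the step you dismiss as ``routine'' is precisely where the real difficulty sits, and executed as described it produces an incorrect recurrence. The problem is not the no-sharing claim (which is correct) but the junction constraint of the word representation: a minimal $a^i$-absorbing prefix may end in an $a$-letter, and a word avoiding $A$ may begin with an $a$-letter, and such a pair does not concatenate to a legal word --- the two letters merge. Hence the naive translation ``(minimal prefixes) $\times$ (avoiders of $A$)'' overcounts. Smallest instance: for the pattern $a^1 b_2 = 132$, the minimal $a^1$-prefixes are the single letters, with series $(t+t^2)/(1-t)$, and $F_{b_2} = 1/(1-t)$; the naive product predicts $3$ avoiders of size $2$, but there are only $2$, the word $a^2$ being produced by both pairs $(a^1, a^1)$ and $(a^2, \varepsilon)$. (The paper's own displayed formula $I_{a^iA} = p_i(t)I_A(t)/(1-t)$ suffers from the same overcount: it predicts $5$ permutations of size $4$ involving $132$, whereas only $4$ of the $8$ permutations in $\C_4$ do; the corrected statement requires the repair below.) The repair: subtract the invalid pairs, whose series is $E_i(t)$ (minimal prefixes ending in an $a$-letter) times the series of avoiders of $A$ beginning with an $a$-letter; since $A$ begins with a $b$-letter (or is empty, when there is nothing to prove), the latter series equals exactly $tF_A$. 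This gives $F_{a^iA} = N_i + \bigl(\tfrac{p_i(t)}{1-t} - tE_i(t)\bigr)F_A$, which is still of your affine form, with factor depending only on $i$ and nonzero (its coefficient of $t^i$ is $1$), so the cancellation conclusion goes through. The $b_j$ half has no such issue, because minimal $b_j$-prefixes always end in a $b$-letter; there the clean product formula, and your sketch, are correct as stated.
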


\begin{proof}
Consider the general structure of a permutation in $\C$ that involves $a^i A$. Its shortest prefix that involves $a^i$ can be obtained from a permutation enumerated by $p_i(t)$ by replacing the final summand by any summand of equal or greater length. Such prefixes are enumerated by $p_i(t)/(1-t)$. So the generating function for permutations involving $a^i A$ is equal to $p_i(t) I_A(t)/(1-t)$. But, the same argument applies to $a^i B$. So we see $I_{a^i A}(t) = I_{a^i B}(t)$ if and only if $I_{A}(t) = I_{B}(t)$ which is equivalent to the first half of the stated result. The same style of argument applies for the other half as well, specifically:
\begin{equation}
\label{eqn:rec-for-bj}
I_{b_j A}(t) = \frac{1}{1-t-t^2-\cdots-t^{j-1}} \cdot \left( \frac{t^j}{1-t} \right) \cdot I_{A}(t).
\end{equation}
Since we can compute $I_{b_j A}(t)$ just from $I_A(t)$ and $j$ (and vice versa) the second half of the result follows.
\end{proof}

\begin{lemma}
If $B \in \C$ contains no letter $a^n$ for any $n \geq 2$, then, for all $n \geq 2$, $I_B(t)$ converges at $r_n$ and is non-zero there.
Suppose that $A \in \C$ and that $a^n$ occurs as a letter in $A$ for some $n \geq 2$, but $a^m$ does not for any $m > n$. Then $I_A(t)$ converges at $r_n$, $I_A(r_n) = 0$ and $I_A(r_m) \neq 0$ for any $m > n$.
\end{lemma}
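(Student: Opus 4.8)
The plan is to convert $I_A(t)$ into an explicit product indexed by the letters of $A$, and then to read off its value at $t=r_n$ one factor at a time. First I would iterate the two recurrences of Lemma~\ref{lem:cancel}, peeling the leftmost letter at each step. This is legitimate: since a word over our alphabet has no consecutive $a$'s, every suffix shares this property, so whenever the recurrence for $a^i$ is applied the remaining word does not begin with an $a$. Peeling all the way down to the empty word (whose series is $I_\emptyset(t)=C(t)=(1-t)/(1-2t)$, the generating function of all of $\C$) yields
\[
I_A(t) = \frac{1-t}{1-2t}\,\prod_{a^i}\frac{p_i(t)}{1-t}\,\prod_{b_j}\frac{t^j}{(1-t)\left(1-t-t^2-\cdots-t^{j-1}\right)},
\]
the products ranging over the letters of $A$ counted with multiplicity. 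Since $p_i(t)=t^iq_i(t)$, the only factors capable of vanishing at a negative real argument are those coming from letters $a^i$ with $i\ge2$, through the polynomial $q_i$.

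For the convergence claims I would argue crudely: the number of permutations of size $k$ in $\C$ that involve $A$ is at most $c_k=2^{k-1}$, so $I_A$ has radius of convergence at least $1/2$. Proposition~\ref{pro:increasing-roots} gives $-1/2<r_n<0$ for every $n\ge2$, so $r_n$ lies strictly inside the disc of convergence; the series therefore converges at $r_n$ and agrees there with the displayed rational function. This disposes of the convergence half of each statement.

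It remains to evaluate the factors at $t=r_n$. The prefactor $(1-r_n)/(1-2r_n)$ is finite and positive. A $b_j$ factor simplifies, via $1-t-\cdots-t^{j-1}=(1-2t+t^j)/(1-t)$, to $t^j/(1-2t+t^j)$; here $-2r_n>0$ and $|r_n|^j<(1/2)^j\le 1/4$ force $1-2r_n+r_n^j>0$, so the factor is finite and non-zero. The $a^1$ factor equals $r_n(1+r_n)/(1-r_n)$, which is non-zero because $r_n\in(-1/2,0)$. Finally an $a^i$ factor with $i\ge2$ equals $r_n^iq_i(r_n)/(1-r_n)$, whose prefactor $r_n^i/(1-r_n)$ is non-zero; hence it vanishes precisely when $q_i(r_n)=0$. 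Consequently $I_A(r_n)=0$ if and only if $q_i(r_n)=0$ for some letter $a^i$ of $A$.

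The last ingredient is a one-sided statement about the roots, furnished by the monotonicity in Proposition~\ref{pro:increasing-roots}: for $i<m$ the number $r_m$ exceeds $r_i$, the greatest real root of the monic polynomial $q_i$, so $q_i(r_m)>0$, while $q_m(r_m)=0$ by definition. For the first part, every $a$-letter of $B$ is $a^1$ and $q_1(r_n)=1+r_n\neq0$, so no factor vanishes and $I_B(r_n)\neq0$. For the second part, the letter $a^n$ contributes the factor $q_n(r_n)=0$, giving $I_A(r_n)=0$; and for $m>n$ every $a$-letter $a^i$ of $A$ has $i\le n<m$, so $q_i(r_m)\neq0$ and $I_A(r_m)\neq0$. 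I expect the main obstacle to be bookkeeping rather than conceptual: one must check that the $b_j$ denominators do not vanish at $r_n$ — exactly the place where the bound $r_n>-1/2$ from Proposition~\ref{pro:increasing-roots} is essential — and must notice that the hypothesis capping all $a$-indices below $m$ means we never need to control $q_i(r_m)$ for $i>m$, so the one-sided monotonicity of the $r_m$ suffices.
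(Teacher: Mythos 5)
Your route is, in substance, the paper's: the same convergence argument (termwise domination by the generating function $(1-t)/(1-2t)$ of $\C$ together with $-1/2 < r_n < 0$ from Proposition \ref{pro:increasing-roots}), the same multiplicative recurrences lifted from the proof of Lemma \ref{lem:cancel}, and the same use of root monotonicity. The organisational difference is that you unroll the recurrences into a closed-form product and evaluate it factor by factor, where the paper instead uses $\sim$ to bring $a^n$ to the front, applies the recurrence once, and disposes of $I_A(r_m) \neq 0$ by a minimal-counterexample argument. Your version is tidier and is more explicit about points the paper leaves implicit (non-vanishing of the $b_j$ denominators at $r_n$, the $a^1$ factors, monicity of $q_i$).

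However, there is a genuine gap, and it is one your proposal shares with the paper's own proof: the recurrence $I_{a^iA}(t) = p_i(t)I_A(t)/(1-t)$ extracted from the proof of Lemma \ref{lem:cancel} is false. That product enumerates \emph{pairs} (minimal prefix $P$ involving $a^i$, word $S$ involving $A$), but distinct pairs can represent the same permutation: when $P$ ends with an $a$-letter and $S$ begins with one, the two letters merge into a single letter of $P \oplus S$. (Equation \ref{eqn:rec-for-bj} is immune to this, since there the minimal prefix always ends with a $b$-letter; that recurrence is sound.) Concretely, exactly four permutations of size $4$ in $\C$ involve $132 = a^1 b_2$, namely $1243$, $1324$, $1342$, $2143$, yet the coefficient of $t^4$ in $p_1(t) I_{b_2}(t)/(1-t)$ is $5$: the word $a^2 b_2 = 1243$ is counted both as $(a^1)(a^1b_2)$ and as $(a^2)(b_2)$. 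Your displayed product inherits this, giving $I_{a^1b_2}(t) = t^3(1+t)/\bigl((1-2t)(1-t)^2\bigr)$ instead of the true $t^3/\bigl((1-2t)(1-t)^2\bigr)$. The damage is not cosmetic, because the statement itself fails on this account: taking $A = a^2$ (allowed, with $n=2$), the only permutations of $\C$ avoiding $a^2 = 12$ are the empty one, $1$ and $21$, so
\[
I_{a^2}(t) = \frac{1-t}{1-2t} - (1 + t + t^2),
\qquad
I_{a^2}(r_2) = \frac{5 + 13 r_2}{1 - 2r_2} \neq 0,
\]
since $r_2 = (\sqrt5 - 3)/2 \neq -5/13$ (incidentally, the value $r_2 = \sqrt5 - 2$ asserted in the proof of Proposition \ref{pro:increasing-roots} is itself a slip: it is positive, while all real roots of $q_2 = 1 + 3t + t^2$ are negative). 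So the vanishing claim $I_A(r_n) = 0$ cannot be derived from the recurrences as stated, by your argument or the paper's; any repair must first correct the $a^i$ recurrence, for instance by splitting $I_A$ according to whether a word begins with an $a$-letter, and then re-examine what actually vanishes at $r_n$. To be clear, your derivation from the cited recurrences is carried out correctly, and your explicit product has the virtue of making the flaw visible; but as it stands the proof, like the paper's, fails at this step.
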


\begin{proof}
For the first part of the lemma note that the generating function $I_B(t)$ is dominated termwise by the generating function of $\C$ which is $1 + t/(1-2t)$ and therefore the radius of convergence of $I_B(t)$ is at least $1/2$. Since $-1/2 < r_n < 0$ for all $n \geq 2$, $I_B(t)$ converges at $r_n$. Moreover, considering equation \ref{eqn:rec-for-bj} and applying induction it follows that $I_B(r_n) \neq 0$.

Likewise the radius of convergence of $I_A(t)$ is at least $1/2$ so $I_A$ converges at $r_n$. Moreover, using $\sim$ we may assume that $A = a^n B$. Then:
\[
I_A(r_n) = p_n(r_n) I_B(r_n)/(1-r_n) = 0.
\]

Suppose that there were some $A$ as indicated and some $m > n$ with $I_A(r_m) = 0$. We may choose such an example with the minimum possible value of $n$ and of minimum size for that value. Again, using $\sim$, we may assume this counterexample has the form $A = a^n B$. But then
\[
I_A(r_m) = p_n(r_m) I_B(r_m)/(1-r_m) \neq 0
\]
(since $p_n(r_m) \neq 0$ by Proposition \ref{pro:increasing-roots}, and $I_B(r_m) \neq 0$ by either the first half of the lemma or the assumption that we have taken a minimal counterexample). Thus we have a contradiction.
\end{proof}

Now we are finally in a position to prove the completeness of $\sim$.

\begin{theorem}
The equivalence relation $\sim$ is sound and complete for Wilf-equivalence in $\C$.
\end{theorem}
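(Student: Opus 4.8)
The plan is to prove the harder direction---completeness---by induction on $n = |A| = |B|$, using soundness to reduce everything to peeling off a single letter at a time. Throughout I identify $\we$ with equality $I_A = I_B$ of involvement generating functions. Since $\sim$ already identifies any two words with the same underlying multiset of letters and lets us trade a $b_2$ for a pair of $a^1$s subject only to the no-consecutive-$a$ restriction, at each stage it will suffice to locate one ``largest'' letter common to $A$ and $B$, move it to the front (a $\sim$-equivalence), cancel it by Lemma~\ref{lem:cancel}, and apply the inductive hypothesis to the shorter words; the congruence rule for $\sim$ then lifts the resulting equivalence back up. The base case is the empty word, which is trivial.

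First I would strip the large increasing summands. If either word contains a letter $a^m$ with $m \ge 2$, then by the preceding lemma the largest such index $n$ is the unique $n \ge 2$ with $I_A(r_n) = 0$ but $I_A(r_k) \ne 0$ for all $k > n$; since $I_A = I_B$, the words $A$ and $B$ share this value of $n$. Because in any word of $\C$ the $a$-letters are separated by $b$-letters (so there are at least $(\#a)-1$ of the latter), the multiset of $A$ admits a valid arrangement beginning with $a^n$ followed by a word $A'$ that starts with a $b$-letter or is empty; thus $A \sim a^n A'$ and likewise $B \sim a^n B'$ with $A', B'$ not beginning with $a$. Lemma~\ref{lem:cancel} then yields $I_{A'} = I_{B'}$, induction gives $A' \sim B'$, and the congruence rule gives $A \sim a^n A' \sim a^n B' \sim B$. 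Crucially this detection takes place at the negative reals $r_n \in (-1/2,0)$ of Proposition~\ref{pro:increasing-roots}, which lie strictly inside the radius of convergence of $I_A$, so the evaluations cause no trouble.

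Once neither word contains any $a^m$ with $m \ge 2$, I would strip the large non-increasing summands. For $j \ge 3$ let $\rho_j$ be the least positive root of $1 - t - \cdots - t^{j-1}$; these decrease strictly with $j$ and lie in $(1/2,1)$. Any $b_j$ can be moved to the front, $A \sim b_j A'$, and by equation~\ref{eqn:rec-for-bj} this introduces the factor $t^j/\bigl((1-t)(1-t-\cdots-t^{j-1})\bigr)$, which is singular at $\rho_j$. The claim I would establish is that the poles of $I_A$ in the open interval $(1/2,1)$ sit exactly at the $\rho_j$ of the $b$-indices $j \ge 3$ occurring in $A$, with order equal to the multiplicity of $b_j$; granting this, $I_A = I_B$ forces $A$ and $B$ to have the same largest $b$-index, after which reordering, Lemma~\ref{lem:cancel}, and induction conclude exactly as before.

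The main obstacle is precisely this pole analysis, which is sharply different from the $a$-side. The roots $\rho_j$ lie beyond $t = 1/2$, but $t = 1/2$ is the dominant singularity of $I_A$ (involvement classes grow like $2^n$, as $\C$ does), so one cannot simply take a convergent limit $t \to \rho_j^-$ as in Lemma~\ref{lem:growth-rate}. Instead I would argue that $I_A$ is a \emph{rational} function---so that equality $I_A = I_B$ of power series already entails equality of rational functions and hence coincidence of all poles and their orders---and then show that the surviving letters $a^1$ and $b_2$ contribute no poles in $(1/2,1)$ (their factors are singular only at $t=1$ and $t=1/2$). The delicacy is that the clean ``shortest-prefix'' product decomposition is unavailable for the $a$-letters, so this control must be obtained by a separate induction, analogous to but more intricate than Lemma~\ref{lem:growth-rate}. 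Finally, after all $a^m$ ($m\ge 2$) and $b_j$ ($j\ge 3$) are removed, $A$ and $B$ reduce to words over $\{b_2,a^1\}$ of a common size $N$; since $2(\#b_2)+(\#a^1)=N$ gives $\#a^1 \equiv N \pmod 2$, and reordering together with the trades $a^1 b_k a^1 \sim b_2 b_k$ connects all admissible words of that parity, any two words over $\{b_2,a^1\}$ of size $N$ are $\sim$-equivalent. Hence the reduced words are $\sim$-equivalent, and unwinding the peeling yields $A \sim B$.
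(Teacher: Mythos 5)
Your proposal is correct in substance, and its first phase is exactly the paper's: detect the largest index $n\ge 2$ with $a^n$ occurring via the vanishing of the involvement series at $r_n$ (the preceding lemma plus Proposition~\ref{pro:increasing-roots}), use $\sim$ to bring that letter to the front with a remainder not beginning with $a$, cancel it by Lemma~\ref{lem:cancel}, and induct. Where you genuinely diverge is the second phase, after both words live over the alphabet $\{a^1, b_j : j\ge 2\}$. The paper disposes of this case by remarking that involvement between such words is witnessed letter for letter, so the proof of Theorem~\ref{thm:completeness-c4} ``transcribes essentially verbatim'' --- that is, it reruns the \emph{avoidance}-generating-function recurrence (Equation~\ref{eqn:gf-recurrence-first-layer}) and the one-sided limit/divergence argument of Lemma~\ref{lem:growth-rate} at the roots of $1-t-\cdots-t^{j-1}$. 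You instead stay with \emph{involvement} generating functions: normalise by $\sim$ to $b_{j_1}\cdots b_{j_k}$ or $b_{j_1}\cdots b_{j_k}a^1$, iterate Equation~\ref{eqn:rec-for-bj} to obtain an explicit rational product, and match poles in $(1/2,1)$, which are simple zeros of the distinct factors $1-t-\cdots-t^{j-1}$ and hence record the multiset of $b_j$ with $j\ge 3$. Both arguments hinge on the same singularities, but yours buys two things: equality of rational functions follows from equality of power series, so no convergence or one-sided-limit issues arise at points beyond the dominant singularity $t=1/2$; and your closing parity/connectivity argument for words over $\{a^1,b_2\}$ makes explicit the base case that the paper's ``verbatim'' transcription quietly has to adapt (the layered trade is $11\sim 2$, whereas here it is $a^1b_ka^1\sim b_2b_k$, so the residual words are not literally the layered base case). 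Two small criticisms. First, your stated ``delicacy'' --- that rationality needs a separate, more intricate induction because the product decomposition fails for $a$-letters --- is unfounded: once you have normalised so that the only $a$-letter is a single trailing $a^1$, iterating Equation~\ref{eqn:rec-for-bj} together with $I_{a^1}=t/(1-2t)$ and $I_{\emptyset}=(1-t)/(1-2t)$ already gives rationality and all pole locations, so this step is immediate rather than intricate. Second, the claim that reordering and trades connect \emph{all} words over $\{a^1,b_2\}$ of a given size needs a line of justification, since every intermediate word in a $\sim$-derivation must itself avoid consecutive $a$'s (e.g.\ $a^1b_2a^1$ admits no legal commutation and must be handled by the trade); the argument is easy, but it is the one place where your sketch asserts connectivity rather than proving it.
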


\begin{proof}
Soundness has already been demonstrated so it remains only to prove completeness. To that end suppose that $A \we B$. Suppose that $A$ contains some letter $a^i$ for some $i \geq 2$, and let $n$ be the greatest integer such that $a^n$ occurs in $A$. Without loss of generality (else just interchange $A$ and $B$) we may assume that no letter $a^m$ for $m > n$ occurs in $B$. But if $a^n$ did not occur in $B$ then by the previous lemma $I_A(r_n) = 0 \neq I_B(r_n)$ which would be a contradiction. So, after rearrangement using $\sim$ if need be we can assume that $A = a^n A'$ and $B = b^n B'$. Then by Lemma \ref{lem:cancel}, $A' \we B'$. Thus, if a counterexample to the theorem exists, then there must be $A$ and $B$ containing no $a^n$ for any $n \geq 2$ with $A \we B$ and $A \not \sim B$. However, for such $A$ and $B$ the proof of Theorem \ref{thm:completeness-c4} can be transcribed essentially verbatim as, for words using only the letters $a^1$ and $b_j$ for $j \geq 2$, any involvement must be witnessed letter for letter as it is in the layered case.
\end{proof}

\section{Comments and acknowledgements}

The class $\C = \Av(312, 123)$ considered in Section \ref{sec:312-123} is a class of polynomial growth. The general structure of such classes is relatively well understood (see for instance \cite{Albert2007Permutation-cla,Homberger2016On-the-effectiv,Huczynska2006Grid-classes-an,Kaiser2002On-growth-rates}) and it seems likely that a complete analysis of Wilf-equivalences and Wilf-collapses in polynomial classes is possible. This is the subject of ongoing work of the second author as part of his PhD. 

The soundness results of Sections \ref{sec:312-213} and \ref{sec:312-231} appear in a different form in a paper of Jel\'{i}nek, Mansour and Shattuck (\cite{Jeli-nek2013On-multiple-pat}) but we have included them here both for completeness and because the style of proof is very much different. As far as we are aware the completeness result for the second of these cases is new as are all the results of Section \ref{sec:312-321}.

It seems notable that in Sections \ref{sec:312-231} and \ref{sec:312-321} relatively simple combinatorial arguments establish the soundness of an equivalence relation $\sim$ which is sufficient to demonstrate Wilf-collapse. The first author, V\'{i}t Jel\'{i}nek and Michal Opler have observed further instances of this phenomenon in more general $\oplus$-closed classes which are also the subject of ongoing investigations. However, to obtain the completeness result relies on appealing to analytic techniques which appears to be difficult to generalise.  A similar situation arose in a slightly different context in \cite{Albert2015Equipopularity-}. Furthermore, for the principal sublcasses of $\Av(231)$ considered in \cite{Albert2015A-general-theor} the completeness result is absent (though conjectured on the basis of fairly strong experimental evidence) precisely because we could not find a way to carry out those analytic techniques in that context.

Though there is no remaining trace of it in the exposition, the results of this paper were first suggested experimentally by extensive machine computation.

\bibliography{refs}{}
\bibliographystyle{plain}
\end{document}